\theoremstyle{plain}
\newtheorem{theorem}{Theorem}
\newtheorem{lemma}{Lemma}
\newtheorem{corollary}{Corollary}
\newtheorem{proposition}{Proposition}
\theoremstyle{definition}
\newtheorem{definition}{Definition}
\theoremstyle{remark}
\newtheorem{remark}{Remark}
\newtheorem{example}{Example}
\numberwithin{equation}{section} 
\begin{document}
	\title[Certain results on Kenmotsu pseudo-metric manifolds]{Certain results on Kenmotsu pseudo-metric manifolds} 
	
	\author{Devaraja Mallesha Naik}
	\address{Department of Mathematics\\Kuvempu University \\ Shankaraghatta\\ 577-451
		Shivamogga\\ India}
	
	\email{devarajamaths@gmail.com}
	
	\thanks{The first author (D.M.N.) is grateful to University Grants Commission, New Delhi (Ref. No.:20/12/2015(ii)EU-V) for financial support in the form of Junior Research Fellowship.}
	
	\author{Venkatesha}
	
	\address{Kuvempu University\\ Department of Mathematics\\ Shankaraghatta\\ 577-451
		Shivamogga\\ India}
	
	\email{vensmath@gmail.com}
	
	\author{D.G. Prakasha}
	
	\address{Department of Mathematics\\ Karnatak University\\ Dharwad, Karnataka\\ India}
	
	
	\email{prakashadg@gmail.com}
	
	\begin{abstract}
		In this paper, a systematic study of Kenmotsu pseudo-metric manifolds are introduced. After studying the properties of this manifolds, we provide necessary and sufficient condition for Kenmotsu pseudo-metric manifold to have constant $\varphi$-sectional curvature, and prove the structure theorem for $\xi$-conformally flat and $\varphi$-conformally flat Kenmotsu pseudo-metric manifolds. Next, we consider Ricci solitons on this manifolds. In particular, we prove that an $\eta$-Einstein Kenmotsu pseudo-metric manifold  of dimension higher than 3 admitting a Ricci soliton is Einstein, and  a Kenmotsu pseudo-metric 3-manifold admitting a Ricci soliton is of constant curvature $-\varepsilon$.		
	\end{abstract}
	
	
	\subjclass[2010]{53C15, 53C25, 53D10.}
	
	\keywords{Almost contact pseudo-metric manifold, Kenmotsu pseudo-metric manifold, $\varphi$-Sectional curvature, Conformal curvature tensor.}
	
	\maketitle
	
\section{Introduction}
The study of contact metric manifolds with associated pseudo-Riemannian metrics were first started by Takahashi \cite{TT} in 1969. Since then, such structures were studied by several authors mainly focusing on the special case of Sasakian manifolds. The case of contact Lorentzian structures $(\eta, g)$, where $\eta$ is a contact one-form and $g$ is a Lorentzian metric associated to it, has a particular relevance for physics and was considered in \cite{Dug} and \cite{BejDug}. A systematic study of almost contact semi-Riemannian  manifolds was undertaken by Calvaruso and Perrone \cite{CalPer1} in 2010, introducing all the technical apparatus which is needed for further investigations.

On the other hand, in 1972 Kenmotsu \cite{Kenm} investigated a class of contact metric manifolds satisfying some special conditions, and after onwards such manifolds are came to known as Kenmotsu manifolds. Recently, Wang and Liu \cite{Wang} investigated almost Kenmotsu manifolds with associated pseudo-Riemannian metric. These are called almost Kenmotsu pseudo-metric manifolds. In this paper, we undertake the systematic study of Kenmotsu pseudo-metric manifolds.

The present paper is organized as follows: In Section~\ref{S:02}, we give the basics of Kenmotsu pseudo-metric manifold $(M,g)$. Certain properties of Kenmotsu pseudo-metric manifolds are provided in Section~\ref{S:03}. We devote Section~\ref{S:04} to the study of curvature properties of Kenmotsu pseudo-metric manifold $(M,g)$ and gave necessary and sufficient condition for $(M,g)$ to have constant $\varphi$-sectional curvature. In Section~\ref{S:05}, we prove necessary and sufficient condition for Kenmotsu pseudo-metric manifold to be $\xi$-conformally flat (and $\varphi$-conformally flat). The last section is focused on the study of Kenmotsu pseudo-metric manifolds whose metric is a Ricci soliton. We show that if $(M,g)$ is a Kenmotsu pseudo-metric manifold admitting a Ricci soliton, then the soliton constant $\lambda=2n\varepsilon$, where $\varepsilon=\pm 1$. Moreover, we show that if $M$ is an $\eta$-Einstein manifold of dimension higher than 3 admitting Ricci soliton, then $M$ is Einstein. Further we show that, a Kenmotsu pseudo-metric manifold $(M,g)$ of dimension 3 admitting Ricci soliton is of constant curvature $-\varepsilon$, where $\varepsilon=\pm 1$. Finally, an illustrative example is constructed which verifies our results.


\section{Preliminaries}\label{S:02}
Let $M$ be a $(2n+1)$ dimensional smooth manifold. We say that $M$ has an \textit{almost contact structure} if there is a tensor field $\varphi$ of type $(1, 1)$, a vector field $\xi$ (called the \textit{characteristic vector field} or \textit{Reeb vector field}), and a 1-form $\eta$ such that
\begin{equation} \label{E:e1}
\varphi^2 = -I + \eta\otimes\xi, \quad \eta (\xi) = 1, \quad
\varphi\xi=0, \quad \eta\circ\varphi=0.
\end{equation}

If $M$ with $(\varphi, \xi, \eta)$-structure is endowed with a
pseudo-Riemannian metric $g$ such that
\begin{equation}\label{E:2.2}
g(\varphi X, \varphi Y)=g( X, Y)-\varepsilon\eta(X)\eta(Y),
\end{equation}
where $\varepsilon=\pm 1$, for all $X,Y\in TM$, then $M$ is called an \textit{almost contact pseudo-metric	manifold}. The relation \eqref{E:2.2} is equivalent to
\begin{equation}\label{E:e3a}
\eta(X)=\varepsilon g(X, \xi)\; {\rm along} \; {\rm with}\;
g(\varphi X,Y)=-g(X,\varphi Y).
\end{equation}
In particular, in an almost contact pseudo-metric manifold, it
follows that $g(\xi, \xi)=\varepsilon$ and so, the characteristic
vector field $\xi$ is a unit vector field, which is either
space-like or time-like, but cannot be light-like.

\par The \textit{fundamental $2$-form} of an almost contact
pseudo-metric manifold is defined by
\[
\Phi(X,Y)=g(X, \varphi Y),
\]
which satisfies $\eta \wedge \Phi^{n}\neq 0$. An almost contact
pseudo-metric manifold is said to be a \textit{contact pseudo-metric
	manifold} if $d\eta=\Phi$. The Riemannian curvature tensor $R$ is given by $R(X, Y)=[\nabla_X, \nabla_Y]-\nabla_{[X, Y]},$ which is opposite to the one used in
\cite{CalPer1}. The Ricci operator $Q$ is
determined by
\[
Ric(X, Y ) = g(QX, Y ),
\]
where $Ric$ denotes the Ricci tensor. In an almost contact pseudo-metric manifold  there always exists a special kind of local pseudo-orthonormal
basis $\{e_i, \varphi e_i, \xi\}_{i=1}^n$, called a local
pseudo $\varphi$-basis.

Consider the manifold $M\times \mathbb{R}$, where $M$ is an almost contact pseudo-metric manifold. Denoting the vector field on $M\times \mathbb{R}$ by $(X, f\frac{d}{dt})$, where $X\in TM$, $t\in \mathbb{R}$, and
$f$ is a smooth function $M\times \mathbb{R}$, we define the structure $J$
on $M\times \mathbb{R}$ by
\[
J\left(X, f\frac{d}{dt}\right)=\left(\varphi X - f\xi,
\eta(X)\frac{d}{dt}\right),
\]
which defines an almost complex structure. If $J$ is integrable, we say that the almost contact pseudo-metric structure $(\varphi, \xi, \eta)$ is normal. Necessary and sufficient condition for integrability of $J$ is \cite{CalPer1}
\[
[\varphi, \varphi](X, Y)+2d\eta(X, Y)\xi=0.
\] 

The following can be easily obtained.
\begin{proposition}
	An almost contact pseudo-metric manifold is normal if and only
	if
	\begin{equation}
	(\nabla_{\varphi X} \varphi )Y - \varphi (\nabla_X\varphi
	)Y+(\nabla_X\eta )(Y)\xi=0,\label{E:2.3}
	\end{equation}
	where $\nabla$ is the Levi-Civita connection.
\end{proposition}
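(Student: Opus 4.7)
The plan is to unwind the normality condition
\[
N(X,Y)\;:=\;[\varphi,\varphi](X,Y)+2d\eta(X,Y)\xi\;=\;0
\]
by rewriting every Lie bracket appearing in $[\varphi,\varphi]$ as a difference of covariant derivatives, which is legitimate because the Levi-Civita connection is torsion-free. The goal of the computation is to identify $N(X,Y)$ with the $X\leftrightarrow Y$ antisymmetrisation
\[
N(X,Y)=P(X,Y)-P(Y,X),
\]
where $P(X,Y):=(\nabla_{\varphi X}\varphi)Y-\varphi(\nabla_{X}\varphi)Y+(\nabla_{X}\eta)(Y)\xi$ is the left-hand side of \eqref{E:2.3}. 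Once this identity is in hand, the equivalence claimed in the proposition follows.

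Concretely, I would first expand
\[
[\varphi,\varphi](X,Y)=\varphi^{2}[X,Y]+[\varphi X,\varphi Y]-\varphi[\varphi X,Y]-\varphi[X,\varphi Y],
\]
substitute $[U,V]=\nabla_{U}V-\nabla_{V}U$ in each of the four brackets, and split every occurrence of $\nabla_{U}(\varphi V)$ as $(\nabla_{U}\varphi)V+\varphi\nabla_{U}V$. At this stage I would invoke $\varphi^{2}=-I+\eta\otimes\xi$ from \eqref{E:e1} to simplify the $\varphi^{2}[X,Y]$ piece and, in parallel, rewrite $2d\eta(X,Y)$ as $(\nabla_{X}\eta)(Y)-(\nabla_{Y}\eta)(X)$, which is again a consequence of torsion-freeness. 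The bookkeeping is routine but delicate: the terms $\pm\nabla_{X}Y$, $\pm\nabla_{Y}X$, $\pm\varphi\nabla_{\varphi X}Y$, $\pm\varphi\nabla_{\varphi Y}X$, together with all the $\eta(\nabla_{(\cdot)}\,\cdot\,)\xi$ contributions, cancel in pairs, leaving precisely $P(X,Y)-P(Y,X)$.

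From the resulting identity, the implication \eqref{E:2.3}$\Rightarrow$ normality is immediate, since $P\equiv 0$ trivially forces $N\equiv 0$. The converse is where I expect the main obstacle: $N\equiv 0$ only forces $P$ to be symmetric in its two arguments, and to conclude $P\equiv 0$ one must further verify that the symmetric component of $P$ vanishes identically on any almost contact pseudo-metric manifold. I would handle this by evaluating $P$ on pairs with one argument equal to $\xi$ (using $\varphi\xi=0$, $\eta(\xi)=1$, and $\eta\circ\varphi=0$ from \eqref{E:e1}) to isolate the $\xi$-directional component, and then exploiting the pseudo-compatibility $g(\varphi X,Y)=-g(X,\varphi Y)$ from \eqref{E:e3a} to pair the horizontal part of $P(X,Y)+P(Y,X)$ against an arbitrary test vector and show that each component vanishes.
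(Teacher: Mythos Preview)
The paper itself offers no proof of this proposition beyond the remark that it ``can be easily obtained'', so there is no argument to compare against; what matters is whether your outline actually goes through.

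Your identity $N(X,Y)=P(X,Y)-P(Y,X)$ is correct, and the implication $P\equiv0\Rightarrow N\equiv0$ is indeed immediate. The difficulty, as you recognise, lies in the converse, and here your plan contains a genuine gap: the symmetric part of $P$ does \emph{not} vanish on an arbitrary almost contact pseudo-metric manifold. For instance, using only $\eta\circ\varphi=0$ and $(\nabla_U\eta)(V)=\varepsilon g(\nabla_U\xi,V)$ one finds
\[
\eta\bigl(P(X,X)\bigr)=\varepsilon\bigl[\,g(\nabla_X\xi,X)-g(\nabla_{\varphi X}\xi,\varphi X)\,\bigr],
\]
and nothing in the structural axioms \eqref{E:e1}--\eqref{E:e3a} forces the symmetric part of $\nabla\xi$ to be $\varphi$-invariant in this sense. (Any almost contact pseudo-metric structure whose operator $h=\tfrac12\pounds_\xi\varphi$ is nonzero already violates it.) Hence the strategy of proving $P(X,Y)+P(Y,X)\equiv0$ unconditionally, with normality entering only through the antisymmetric relation $N=0$, cannot succeed; your proposed tools---evaluating at $\xi$ and invoking \eqref{E:e3a}---are simply not strong enough to kill the symmetric part without further hypotheses.

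What is true is that normality \emph{itself} supplies the missing relations. The standard route in the literature is not to split $P$ into symmetric and antisymmetric pieces at all, but to start from the general identity expressing $2g((\nabla_X\varphi)Y,Z)$ in terms of $d\Phi$, $d\eta$ and the four Nijenhuis-type tensors $N^{(1)},\dots,N^{(4)}$, and then observe that on a normal structure all four $N^{(i)}$ vanish, after which the residual $d\Phi$, $d\eta$ terms rearrange directly into \eqref{E:2.3}. Alternatively one can combine the symmetry $P(X,Y)=P(Y,X)$ (from $N=0$) with the further consequences $\pounds_\xi\varphi=0$ and $\pounds_\xi\eta=0$ of normality to eliminate the symmetric part---but either way the normality hypothesis has to be used substantively, not just through $N=0$.
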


An \textit{almost Kenmotsu pseudo-metric manifold} is an almost contact pseudo-metric manifold with $d \eta=0$ and $d \Phi=2 \eta\wedge \Phi$. A normal almost Kenmotsu pseudo-metric manifold is called a Kenmotsu pseudo-metric manifold \cite{Wang}. Equivalently, from \eqref{E:2.3} we have the following:
\begin{definition}
	Almost contact pseudo-metric manifold is said to be \textit{Kenmotsu pseudo-metric manifold} if 
	\begin{equation}\label{E:2.4}
	(\nabla_X \varphi)Y=-\eta(Y)\varphi X-\varepsilon g(X, \varphi Y)\xi.
	\end{equation}
\end{definition}

From \eqref{E:2.4}, we see
\begin{equation}\label{E:2.5}
\nabla \xi=I-\eta\otimes\xi.
\end{equation}
A straight forward calculation gives the following:
\begin{proposition}\label{P:2.03}
	On Kenmotsu pseudo-metric manifold $(M,g)$, we have
	\begin{gather}
		(\nabla_X\eta)Y=\varepsilon g(X,Y)-\eta(X)\eta(Y),\label{E:2.6}\\
		\pounds_\xi g=2g-\varepsilon \eta\otimes\eta,\label{E:2.7}\\
		\pounds_\xi \varphi=0,\label{E:2.8}\\
		\pounds_\xi \eta=0,\label{E:2.9}
	\end{gather}
	where $\pounds$ denotes the Lie derivative.
\end{proposition}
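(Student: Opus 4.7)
The plan is to derive all four identities directly from equations \eqref{E:2.4} and \eqref{E:2.5}, using the relations in \eqref{E:e3a} to convert between $\eta$ and $g(\cdot,\xi)$.

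First, for \eqref{E:2.6}, I would start from $\eta(Y)=\varepsilon g(Y,\xi)$, differentiate in the direction $X$ using metric compatibility of $\nabla$, and obtain $(\nabla_X\eta)Y=\varepsilon g(Y,\nabla_X\xi)$. Plugging in $\nabla_X\xi=X-\eta(X)\xi$ from \eqref{E:2.5} and again using $g(Y,\xi)=\varepsilon\eta(Y)$ gives the claim after noting $\varepsilon^2=1$. Identity \eqref{E:2.6} is really the backbone of the other three.

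For \eqref{E:2.7}, the standard formula for the Lie derivative of the metric along a vector field reads $(\pounds_\xi g)(X,Y)=g(\nabla_X\xi,Y)+g(\nabla_Y\xi,X)$; substituting \eqref{E:2.5} in each term and again rewriting $g(\xi,\cdot)$ through $\eta$ yields the asserted expression. For \eqref{E:2.9} the analogous trick works: $(\pounds_\xi\eta)(X)=(\nabla_\xi\eta)(X)+\eta(\nabla_X\xi)$, where the first summand vanishes by applying \eqref{E:2.6} with $X$ replaced by $\xi$ (since $\eta(\xi)=1$ and $\varepsilon g(\xi,X)=\eta(X)$), and the second vanishes by \eqref{E:2.5} together with $\eta(\xi)=1$.

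The most delicate computation is \eqref{E:2.8}. I would use $(\pounds_\xi\varphi)X=[\xi,\varphi X]-\varphi[\xi,X]$ and rewrite each bracket through $\nabla$: $[\xi,\varphi X]=\nabla_\xi(\varphi X)-\nabla_{\varphi X}\xi$ and $\varphi[\xi,X]=\varphi\nabla_\xi X-\varphi\nabla_X\xi$. Setting $X=\xi$ in \eqref{E:2.4} and using $\varphi\xi=0$ together with $g(\xi,\varphi Y)=-g(\varphi\xi,Y)=0$ shows $(\nabla_\xi\varphi)Y=0$, so $\nabla_\xi(\varphi X)=\varphi\nabla_\xi X$. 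Meanwhile \eqref{E:2.5} gives $\nabla_{\varphi X}\xi=\varphi X$ (because $\eta(\varphi X)=0$) and $\varphi\nabla_X\xi=\varphi X$ (because $\varphi\xi=0$). Subtracting shows all remaining terms cancel.

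The only real obstacle is bookkeeping of the $\varepsilon$ factors and careful use of $\eta\circ\varphi=0$ and $\varphi\xi=0$ in the $\pounds_\xi\varphi$ calculation; once \eqref{E:2.6} is in hand, each identity is a one- or two-line computation, so I would present them in the order \eqref{E:2.6}, \eqref{E:2.7}, \eqref{E:2.9}, \eqref{E:2.8}, reusing \eqref{E:2.6} in the proofs of \eqref{E:2.7} and \eqref{E:2.9}.
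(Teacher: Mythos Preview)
Your proposal is correct and is exactly the straightforward computation the paper alludes to (the paper gives no proof beyond the sentence ``A straight forward calculation gives the following''). Note that carrying out your argument for \eqref{E:2.7} actually yields $\pounds_\xi g = 2(g - \varepsilon\,\eta\otimes\eta)$, which is what the paper uses later (see the derivation of \eqref{EQ:49}); the missing factor of $2$ in the stated \eqref{E:2.7} appears to be a typographical slip.
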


\section{Some properties of Kenmotsu pseudo metric manifolds}\label{S:03}
For $X\in \text{Ker } \eta$, either space-like or time-like, the \textit{$\xi$-sectional curvature} $K(\xi, X )$ and \textit{$\varphi$-sectional curvature} $K(X, \varphi X)$ are defined respectively
as
\begin{align*}
	K(\xi, X)&=\frac{g(R(\xi,X)X, \xi)}{\varepsilon g(X,X)},\\
	K(X, \varphi X)&=\frac{g(R(\varphi X,X)X, \varphi X)}{ g(X,X)^2}.
\end{align*}
Now we prove:
\begin{proposition}
	If $(M,g)$ is a Kenmotsu pseudo-metric manifold, then we have
	\begin{align}
		R(X, Y)\xi&=\eta(X)Y-\eta(Y)X,\label{E:3.1}\\
		\eta(R(X,Y)Z)&=\eta(Y)g(X,Z)-\eta(X)g(Y, Z),\label{E:3.02}\\
		R(X,\xi)Y&=\varepsilon g(X,Y)\xi-\eta(Y)X,\label{E:3.2}\\
		Ric(X,\xi)&=-2n\eta(X) \quad (\Rightarrow Q\xi=-2n\varepsilon \xi),\label{E:3.3}\\
		K(\xi, \cdot)&=-\varepsilon,\label{E:3.4}\\
		(\nabla_Z R)(X,Y,\xi)&=\varepsilon \{g(X,Z)Y-g(Y,Z)X\}-R(X,Y)Z. \label{E:3.5}
	\end{align}
\end{proposition}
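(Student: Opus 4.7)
My plan is to derive the six identities in cascade: establish \eqref{E:3.1} by direct computation from the covariant derivative of $\xi$, then obtain the others from \eqref{E:3.1} by standard curvature manipulations, with the derivative identity \eqref{E:3.5} coming last.

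The starting point is equation \eqref{E:2.5}, which reads $\nabla_X\xi = X - \eta(X)\xi$. Substituting this into $R(X,Y)\xi = \nabla_X\nabla_Y\xi - \nabla_Y\nabla_X\xi - \nabla_{[X,Y]}\xi$ and expanding by the Leibniz rule, the pieces $\nabla_X Y - \nabla_Y X - [X,Y]$ cancel by torsion-freeness, while the remaining $\xi$-coefficient collapses to $\eta([X,Y]) + Y\eta(X) - X\eta(Y) = -d\eta(X,Y)$, which is zero since Kenmotsu pseudo-metric manifolds are in particular almost Kenmotsu. What is left is exactly $\eta(X)Y - \eta(Y)X$, proving \eqref{E:3.1}.

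For \eqref{E:3.02} I would write $\eta(R(X,Y)Z) = \varepsilon g(R(X,Y)Z,\xi)$ via \eqref{E:e3a}, apply the skew-symmetry $g(R(X,Y)Z,\xi) = -g(R(X,Y)\xi, Z)$, and insert \eqref{E:3.1}. For \eqref{E:3.2} I would use the pair symmetry $g(R(X,\xi)Y, W) = g(R(Y,W)X,\xi)$, reducing to the formula just obtained for $\eta(R(Y,W)X)$, and read off $R(X,\xi)Y$ by non-degeneracy of $g$. Equation \eqref{E:3.3} follows by tracing \eqref{E:3.1}: using a pseudo-orthonormal basis $\{e_i\}$ with signs $\varepsilon_i$, the sum $\sum_i\varepsilon_i g(\eta(e_i)X - \eta(X)e_i, e_i)$ simplifies via $\sum_i\varepsilon_i\eta(e_i)e_i = \xi$ and $\sum_i\varepsilon_i = 2n+1$ to $-2n\eta(X)$, and then $Q\xi = -2n\varepsilon\xi$ is immediate from $\eta(Y) = \varepsilon g(\xi,Y)$. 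Identity \eqref{E:3.4} is now obtained by setting $Y = X$ in \eqref{E:3.2}, pairing with $\xi$, and dividing by $\varepsilon g(X,X)$ in the definition of $K(\xi,X)$; the $\eta(X)^2$ contribution vanishes because $X \in \ker\eta$.

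Finally, for \eqref{E:3.5} I would covariantly differentiate \eqref{E:3.1}, writing
\[
(\nabla_Z R)(X,Y,\xi) = \nabla_Z(R(X,Y)\xi) - R(\nabla_Z X,Y)\xi - R(X,\nabla_Z Y)\xi - R(X,Y)\nabla_Z\xi.
\]
The first three terms reorganize so that each $Z\eta(\cdot) - \eta(\nabla_Z\cdot)$ factor is precisely $(\nabla_Z\eta)(\cdot)$, which equals $\varepsilon g(Z,\cdot) - \eta(Z)\eta(\cdot)$ by \eqref{E:2.6}. The last term, using $\nabla_Z\xi = Z - \eta(Z)\xi$ and \eqref{E:3.1}, produces $R(X,Y)Z$ together with the compensating $\eta(Z)[\eta(X)Y - \eta(Y)X]$, which exactly cancels the cross-terms from the first three pieces, leaving \eqref{E:3.5}. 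The main obstacle is bookkeeping rather than conceptual: in the pseudo-metric setting one must track every factor of $\varepsilon = \pm 1$ carefully, since the sign enters each time $\xi$ is raised or lowered, and also in the trace used for \eqref{E:3.3}. No new structural input beyond \eqref{E:2.4}--\eqref{E:2.6} and the Riemann tensor symmetries is needed.
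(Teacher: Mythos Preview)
Your approach is correct and matches the paper's exactly: the paper states that \eqref{E:3.1} comes from \eqref{E:2.5} and \eqref{E:2.6}, that \eqref{E:3.02}--\eqref{E:3.4} are consequences of \eqref{E:3.1}, and that \eqref{E:3.5} follows from \eqref{E:2.5}, \eqref{E:2.6} and \eqref{E:3.1}, which is precisely the cascade you describe. Two small bookkeeping slips in your trace for \eqref{E:3.3}: in a pseudo-orthonormal basis one has $\sum_i \varepsilon_i g(e_i,e_i)=\sum_i \varepsilon_i^2 = 2n+1$ (not $\sum_i \varepsilon_i$), and $\sum_i \varepsilon_i \eta(e_i)e_i = \varepsilon\xi$ rather than $\xi$; both corrections still yield $Ric(X,\xi)=-2n\eta(X)$.
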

\begin{proof}
	Equations \eqref{E:2.5} and \eqref{E:2.6} give \eqref{E:3.1}. Equations \eqref{E:3.02}, \eqref{E:3.2}, \eqref{E:3.3} and \eqref{E:3.4} are consequences of \eqref{E:3.1}. Equation \eqref{E:3.5} follows from \eqref{E:2.5}, \eqref{E:2.6} and \eqref{E:3.1}.
\end{proof}
\begin{definition}
	An almost contact pseudo-metric manifold for which \[\varphi^2(\nabla_W R)(X, Y,Z)=0,\] for all $X,Y,Z,W\in TM$ is said to be \textit{globally $\varphi$-symmetric}. 
\end{definition}
Using \eqref{E:3.02} and \eqref{E:3.5}, we have the following:
\begin{corollary}
	A globally $\varphi$-symmetric Kenmotsu pseudo-metric manifold is of constant curvature $-\varepsilon$.
\end{corollary}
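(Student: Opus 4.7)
The plan is to unpack the $\varphi$-symmetry condition using $\varphi^{2} = -I + \eta\otimes\xi$ and then reduce everything to $Z=\xi$, where the known identity \eqref{E:3.5} does the heavy lifting. From $\varphi^{2}V = -V + \eta(V)\xi$ one sees that $\varphi^{2}V = 0$ is equivalent to $V = \eta(V)\,\xi$. Applied to the hypothesis, this gives the compact statement
\begin{equation*}
(\nabla_{W}R)(X,Y,Z) \;=\; \eta\bigl((\nabla_{W}R)(X,Y,Z)\bigr)\,\xi,
\end{equation*}
so every covariant derivative of $R$ is pointwise proportional to $\xi$. This is the only place the globally $\varphi$-symmetric assumption enters; after this step the proof is pure substitution.

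Next I would specialize $Z = \xi$ and substitute \eqref{E:3.5}, obtaining
\begin{equation*}
\varepsilon\{g(X,W)Y - g(Y,W)X\} - R(X,Y)W \;=\; \eta(\cdot)\,\xi,
\end{equation*}
where $\eta(\cdot)$ denotes the $\eta$-value of the left-hand side. To pin down the scalar, I would apply $\eta$ to both sides and use \eqref{E:3.02} to expand $\eta(R(X,Y)W) = \eta(Y)g(X,W) - \eta(X)g(Y,W)$. The idea is that this $\eta$-value cancels against $\varepsilon\{g(X,W)\eta(Y) - g(Y,W)\eta(X)\}$, forcing the $\xi$-component to vanish. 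What remains is
\begin{equation*}
R(X,Y)W \;=\; \varepsilon\{g(X,W)Y - g(Y,W)X\},
\end{equation*}
which is exactly the curvature tensor of a space of constant sectional curvature $-\varepsilon$; one can double-check consistency by contracting with $\xi$ and comparing against \eqref{E:3.1}.

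The main potential obstacle is the sign/coefficient bookkeeping in the cancellation, especially keeping track of $\varepsilon$ when converting between $\eta(X)$ and $g(X,\xi) = \varepsilon\,\eta(X)$; a miscounted factor of $\varepsilon$ there produces a stray $\xi$-term that is not killed and yields something weaker than constant curvature. Once that computation is done correctly, no further curvature identity is required.
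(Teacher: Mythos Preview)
Your proposal is correct and follows exactly the route the paper indicates: the paper's proof is the single line ``Using \eqref{E:3.02} and \eqref{E:3.5}'', and you have simply written out what that means---set the third argument to $\xi$, invoke \eqref{E:3.5}, and use \eqref{E:3.02} to see the $\xi$-component vanishes. Your caution about the $\varepsilon$ bookkeeping is well placed (indeed \eqref{E:3.02} as printed appears to be missing a factor of $\varepsilon$, since $\eta(\cdot)=\varepsilon g(\cdot,\xi)$), but with the correct form the cancellation goes through exactly as you describe.
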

A Kenmotsu pseudo-metric manifold $M$ is said to be $\eta$-Einstein if the Ricci tensor satisfies
\begin{equation}\label{E:4.1}
Ric(X,Y)=a g(X,Y)+b\eta(X)\eta(Y),
\end{equation}
where $a$ and $b$ are certain smooth functions on $M$. If $b=0$, then $M$ is called an \textit{Einstein} manifold.

From (\ref{E:3.3}), we have
\begin{equation}\label{E:4.2}
\varepsilon a+b=-2n.
\end{equation}
Contracting \eqref{E:4.1} and using \eqref{E:4.2}, we get
\begin{equation}\label{E:4.3}
a=\left( \frac{r}{2n}+\varepsilon\right) , \quad b=-\left( \frac{\varepsilon r}{2n}+2n+1\right),
\end{equation}
where $r$ is the scalar curvature.
Thus, we have: 
\begin{proposition}
	A Kenmotsu pseudo-metric manifold $(M,g)$ is $\eta$-Einstein if and only if 
	\begin{equation}\label{E:4.4}
	Ric(X,Y)=\left( \frac{r}{2n}+\varepsilon\right)g(X,Y)-\left( \frac{\varepsilon r}{2n}+2n+1\right)\eta(X)\eta(Y).
	\end{equation}
\end{proposition}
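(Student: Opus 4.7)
The proposition packages the computations already appearing in equations \eqref{E:4.1}--\eqref{E:4.3} into a single characterization, so my plan is to reverse-engineer that derivation and verify the equivalence.

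For the forward direction, assume $M$ is $\eta$-Einstein, so $Ric(X,Y)=a g(X,Y)+b\eta(X)\eta(Y)$ for some smooth functions $a,b$. I would extract two independent scalar equations to pin these down. The first comes from evaluating at $X=Y=\xi$: using $g(\xi,\xi)=\varepsilon$ and $\eta(\xi)=1$, together with $Ric(\xi,\xi)=-2n$ from \eqref{E:3.3}, this yields $\varepsilon a+b=-2n$, which is \eqref{E:4.2}. The second comes from tracing against a local pseudo-$\varphi$-basis $\{e_i,\varphi e_i,\xi\}_{i=1}^{n}$: the trace of $g$ returns $\dim M=2n+1$, while the trace of $\eta\otimes\eta$ picks up only the $\xi$-slot contribution and equals $\varepsilon$, giving $r=(2n+1)a+\varepsilon b$. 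Solving this $2\times 2$ linear system in $(a,b)$ produces
\[
a=\frac{r}{2n}+\varepsilon,\qquad b=-\left(\frac{\varepsilon r}{2n}+2n+1\right),
\]
as in \eqref{E:4.3}. Substituting back into the $\eta$-Einstein formula yields exactly \eqref{E:4.4}.

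The converse is immediate, since \eqref{E:4.4} is manifestly of the form \eqref{E:4.1}. The only delicate point is careful bookkeeping of signs when tracing against the pseudo-orthonormal basis: one must consistently use the signatures $\varepsilon_i=g(e_i,e_i)=\pm 1$ of the basis vectors, and note that $\eta$ vanishes on both $e_i$ and $\varphi e_i$ so that $\eta\otimes\eta$ contributes only through $\xi$ and carries the sign $\varepsilon$. Beyond this, I expect no substantive obstacle.
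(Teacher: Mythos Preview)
Your proposal is correct and follows essentially the same route as the paper: the paper's proof is precisely the short computation in \eqref{E:4.1}--\eqref{E:4.3}, namely evaluating the $\eta$-Einstein condition at $\xi$ to obtain $\varepsilon a+b=-2n$ and then tracing to solve for $a$ and $b$. Your added remarks on the sign bookkeeping in the pseudo-orthonormal trace and the triviality of the converse simply make explicit what the paper leaves implicit.
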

In particular, we have the following:
\begin{corollary}
	A Kenmotsu pseudo-metric manifold $(M,g)$ is Einstein if and only if 
	\begin{equation}\label{E:4.5}
	Ric(X,Y)=-2n\varepsilon g(X,Y).
	\end{equation}
\end{corollary}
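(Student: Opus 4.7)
The proof should be a short deduction from the preceding Proposition. The plan is to view the Einstein condition as the special case $b=0$ of the $\eta$-Einstein equation \eqref{E:4.4} and determine the common coefficient $a$.

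For the forward implication, suppose $(M,g)$ is Einstein, so that $Ric(X,Y)=a\,g(X,Y)$ for some smooth function $a$. This is the $\eta$-Einstein form with $b=0$, so one can either invoke Proposition above directly or argue intrinsically as follows. Set $Y=\xi$: by \eqref{E:e3a} we have $g(X,\xi)=\varepsilon\eta(X)$, hence $Ric(X,\xi)=\varepsilon a\,\eta(X)$. Comparing with \eqref{E:3.3}, which gives $Ric(X,\xi)=-2n\,\eta(X)$, we obtain $\varepsilon a=-2n$, i.e.\ $a=-2n\varepsilon$. This yields \eqref{E:4.5}.

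Alternatively, using Proposition directly: the Einstein condition forces $b=-\bigl(\tfrac{\varepsilon r}{2n}+2n+1\bigr)=0$, which determines $r=-2n(2n+1)\varepsilon$. Substituting into the expression for $a$ in \eqref{E:4.3} gives
\begin{equation*}
a=\frac{r}{2n}+\varepsilon=-(2n+1)\varepsilon+\varepsilon=-2n\varepsilon,
\end{equation*}
and \eqref{E:4.5} follows. The converse is immediate, since \eqref{E:4.5} is exactly the Einstein condition with Einstein constant $-2n\varepsilon$.

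There is no genuine obstacle here: the content is just to identify the constant $a$, and the only mild point to be careful about is the sign arising from $g(X,\xi)=\varepsilon\eta(X)$ rather than $g(X,\xi)=\eta(X)$, which is what produces the factor $\varepsilon$ in the Einstein constant and distinguishes the pseudo-Riemannian case from the Riemannian one.
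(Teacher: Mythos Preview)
Your proposal is correct and matches the paper's treatment: the paper states this result as an immediate special case of the preceding Proposition (``In particular, we have the following'') without giving a separate argument, and both of your routes---either setting $b=0$ in \eqref{E:4.3}--\eqref{E:4.4} or comparing $Ric(X,\xi)=a\,\varepsilon\eta(X)$ with \eqref{E:3.3}---are exactly the intended one-line deductions.
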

\begin{proposition}
	If the Kenmotsu pseudo-metric manifold $(M,g)$ is $\eta$-Einstein, then
	\begin{equation}\label{E:4.6}
	X(b)+2b\eta(X)=0,
	\end{equation}
	for $n>1$, and for any vector field $X\in TM$.
\end{proposition}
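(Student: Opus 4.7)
The plan is to apply the contracted second Bianchi identity $\sum_i \varepsilon_i(\nabla_{e_i} Ric)(e_i,Y) = \frac{1}{2}Y(r)$ to the $\eta$-Einstein Ricci tensor \eqref{E:4.1} and then exploit the algebraic constraint $\varepsilon a + b = -2n$ from \eqref{E:4.2} together with the consequence \eqref{E:4.3} that both $a$ and $b$ are determined by $r$.

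First, I would covariantly differentiate \eqref{E:4.1} to obtain
\[
(\nabla_Z Ric)(X,Y) = Z(a)\,g(X,Y) + Z(b)\,\eta(X)\eta(Y) + b\bigl[(\nabla_Z\eta)(X)\,\eta(Y) + \eta(X)\,(\nabla_Z\eta)(Y)\bigr],
\]
and substitute $(\nabla_Z\eta)(X) = \varepsilon g(Z,X) - \eta(Z)\eta(X)$ from \eqref{E:2.6}. Setting $Z = e_i$, multiplying by $\varepsilon_i$ and summing over a local pseudo-orthonormal basis $\{e_i\}$, I would use the basic trace identity $\sum_i \varepsilon_i g(V,e_i)g(e_i,W) = g(V,W)$, together with $\xi = \sum_i \varepsilon_i \eta(e_i) e_i/\varepsilon$ (so that $\sum_i \varepsilon_i \eta(e_i)g(\xi,e_i) = 1$) and $\dim M = 2n+1$. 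After the dust settles this collapses to the scalar identity
\[
\tfrac{1}{2} Y(r) = Y(a) + \varepsilon\,\xi(b)\,\eta(Y) + 2n b \varepsilon\,\eta(Y).
\]

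Next, from \eqref{E:4.3} I have $Y(r) = 2n\,Y(a)$ and $Y(a) = -\varepsilon\,Y(b)$, so the above rearranges to
\[
(n-1)\,Y(b) = -\eta(Y)\bigl[\xi(b) + 2nb\bigr].
\]
Plugging $Y = \xi$ gives $(n-1)\xi(b) = -\xi(b) - 2nb$, hence $\xi(b) = -2b$, so the bracket equals $2(n-1)b$. Since $n > 1$ I may divide by $n-1$ to obtain the desired relation $X(b) + 2b\,\eta(X) = 0$.

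The main obstacle is the bookkeeping: tracking the $\varepsilon$ factors coming from \eqref{E:2.2}–\eqref{E:e3a} and performing the pseudo-Riemannian trace carefully over a pseudo $\varphi$-basis (several terms each contribute an $\eta(Y)$, and the cancellation producing the clean coefficient $(n-1)$ only appears after using both $Y(a) = -\varepsilon Y(b)$ and $Y(r) = 2nY(a)$). Everything else is a straightforward application of formulas \eqref{E:2.5}, \eqref{E:2.6} and the definition of $Q$.
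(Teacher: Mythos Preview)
Your proposal is correct and follows essentially the same approach as the paper. The paper phrases the computation via $\operatorname{div}Q=\tfrac{1}{2}Dr$ applied to $QY=aY+\varepsilon b\,\eta(Y)\xi$, arriving at $(n-1)Y(a)=\varepsilon\{\xi(b)+2nb\}\eta(Y)$, which is your identity rewritten through $Y(a)=-\varepsilon Y(b)$; the rest of the argument (set $Y=\xi$, deduce $\xi(b)=-2b$, then divide by $n-1$) is identical.
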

\begin{proof}
	Equation \eqref{E:4.4} is equivalent to
	\begin{equation}\label{E:4.7}
	QY=aY+b\varepsilon \eta(Y)\xi,
	\end{equation}
	where $a$ and $b$ are as in \eqref{E:4.3}. It is well known that 
	\begin{equation}\label{E:4.8}
	\text{div}Q=\frac{1}{2}Dr,
	\end{equation}
	where $D$ denotes the gradient. Equations \eqref{E:4.7} and \eqref{E:4.8} yields to
	\begin{equation*}
		(n-1)Y(a)=\varepsilon \{\xi(b)\eta(Y)+2nb\eta(Y)\}.
	\end{equation*}
	For $Y=\xi$, it gives $\xi(b)=-2b$, and so we get \eqref{E:4.6} for $n>1$.
\end{proof}
\begin{corollary}\label{c:3.7}
	If $b$ (or $a$) is constant in an $\eta$-Einstein Kenmotsu pseudo-metric manifold, then it is Einstein.
\end{corollary}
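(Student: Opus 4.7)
The plan is to feed the differential identity \eqref{E:4.6} from the preceding Proposition with the constancy hypothesis and read off the conclusion algebraically. Assume first that $b$ is constant, so $X(b)=0$ for every vector field $X$. Then \eqref{E:4.6} collapses to $2b\eta(X)=0$ for all $X$; testing this with $X=\xi$ and using $\eta(\xi)=1$ gives $b=0$. Substituting back into the $\eta$-Einstein formula \eqref{E:4.1} leaves $Ric(X,Y)=a\,g(X,Y)$, which by the linear constraint \eqref{E:4.2} forces $a=-2n\varepsilon$, i.e.\ exactly the Einstein condition \eqref{E:4.5}.

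The case where $a$ is constant reduces at once to the one just handled. The identity $\varepsilon a+b=-2n$ from \eqref{E:4.2} shows that $a$ and $b$ are affinely related, so constancy of one is equivalent to constancy of the other; equivalently, the formulas \eqref{E:4.3} display both $a$ and $b$ as affine functions of the scalar curvature $r$, so $a$ is constant precisely when $r$ is, which in turn is precisely when $b$ is.

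There is no real obstacle to overcome: the entire content of the corollary is already packaged inside the preceding Proposition, and what remains is a single substitution $X=\xi$ followed by an algebraic cleanup using \eqref{E:4.2}. The only subtlety worth flagging is that \eqref{E:4.6} is derived via the contracted second Bianchi identity \eqref{E:4.8} in a step that divides by $n-1$, so the argument requires $n>1$, i.e.\ $\dim M>3$; this matches the standing dimensional hypothesis advertised in the abstract.
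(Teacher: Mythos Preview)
Your argument is correct and is exactly the intended one: the corollary is an immediate consequence of the preceding Proposition's identity \eqref{E:4.6}, and your substitution $X=\xi$ followed by \eqref{E:4.2} is precisely how the paper means for it to be read off. Your remark about the hidden hypothesis $n>1$ is also apt, since the corollary inherits it from that Proposition even though the statement does not display it.
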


\section{Curvature properties of Kenmotsu pseudo metric manifolds}\label{S:04}
First we prove the following Lemma which is very useful in subsequent sections.
\begin{lemma}
	On Kenmotsu pseudo-metric manifold $(M,g)$, we have the following identities:
	\begin{align}
		R(X,Y)\varphi Z-\varphi R(X,Y)Z=&\varepsilon \{g(Y,Z)\varphi X-g(X, Z)\varphi Y\nonumber\\	&+g(X,\varphi Z)Y-g(Y, \varphi Z)X\},\label{E:3.6}\\
		R(\varphi X, \varphi Y)Z=R(X,Y)Z&+\varepsilon \{g(Y, Z)X-g(X,Z)Y\nonumber\\
		&+g(Y, \varphi Z)\varphi X-g(X, \varphi Z)\varphi Y\}.\label{E:3.7}
	\end{align}
\end{lemma}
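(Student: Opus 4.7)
The plan is to prove \eqref{E:3.6} from \eqref{E:2.4} via the Ricci identity applied to the $(1,1)$-tensor $\varphi$, and then to deduce \eqref{E:3.7} from \eqref{E:3.6} by a metric-pairing argument combined with the pair symmetry of the Riemann tensor.

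For \eqref{E:3.6}, I would start from the tensorial Ricci identity
\[
R(X,Y)\varphi Z-\varphi R(X,Y)Z=\bigl(\nabla_{X}\nabla_{Y}\varphi-\nabla_{Y}\nabla_{X}\varphi-\nabla_{[X,Y]}\varphi\bigr)(Z),
\]
substitute $(\nabla_{Y}\varphi)Z=-\eta(Z)\varphi Y-\varepsilon g(Y,\varphi Z)\xi$ from \eqref{E:2.4}, and expand $\nabla_{X}\bigl((\nabla_{Y}\varphi)Z\bigr)$ with the help of \eqref{E:2.5}, \eqref{E:2.6} and metric compatibility. Already inside $\nabla_{X}\bigl((\nabla_{Y}\varphi)Z\bigr)$ the $\xi$-contributions of the form $\varepsilon\eta(Z)g(X,\varphi Y)\xi$ and $\varepsilon\eta(Z)g(Y,\varphi X)\xi$ kill each other by the antisymmetry $g(X,\varphi Y)+g(Y,\varphi X)=0$. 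After antisymmetrizing in $X$ and $Y$, the $\eta(X)\eta(Z)\varphi Y$-type terms are $X\!\leftrightarrow\!Y$-symmetric and drop out, the terms containing $\nabla_{X}Y$ and $\nabla_{Y}X$ combine into $[X,Y]$-contributions that are precisely cancelled by $(\nabla_{[X,Y]}\varphi)(Z)$, and the four surviving terms reproduce the right-hand side of \eqref{E:3.6}.

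For \eqref{E:3.7}, I would take the inner product of \eqref{E:3.6} with $\varphi W$ and use \eqref{E:2.2} to rewrite each $g(\varphi A,\varphi B)$ as $g(A,B)-\varepsilon\eta(A)\eta(B)$, together with the identity $g(R(X,Y)Z,\xi)=\eta(Y)g(X,Z)-\eta(X)g(Y,Z)$, which follows from \eqref{E:3.1} by the antisymmetry of $R$ in its last two slots. Because $\varepsilon^{2}=1$, all $\eta\otimes\eta$-corrections cancel in pairs, leaving the scalar identity
\[
g\!\left(R(X,Y)\varphi Z,\varphi W\right)=g(R(X,Y)Z,W)+\varepsilon\bigl\{g(Y,Z)g(X,W)-g(X,Z)g(Y,W)+g(X,\varphi Z)g(Y,\varphi W)-g(Y,\varphi Z)g(X,\varphi W)\bigr\}.
\]
Applying the pair symmetry $g(R(X,Y)\varphi Z,\varphi W)=g(R(\varphi Z,\varphi W)X,Y)$ on the left and relabelling $(X,Y,Z,W)\mapsto(Z,W,X,Y)$, this scalar identity converts, after one more use of $g(\varphi A,B)=-g(A,\varphi B)$, into \eqref{E:3.7}.

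The main obstacle is the $\varepsilon$-bookkeeping in the derivation of the displayed scalar identity. Several $\eta\otimes\eta$-type contributions appear with coefficients $\pm\varepsilon$ or $\pm 1$, and they cancel only because of $\varepsilon^{2}=1$ combined with the implicit factor of $\varepsilon$ hidden in the passage from $g(R(\cdot,\cdot)\cdot,\xi)$ to $\eta\bigl(R(\cdot,\cdot)\cdot\bigr)$ via $\eta=\varepsilon g(\cdot,\xi)$. A single misplaced $\varepsilon$ would leave surviving $\eta$-contributions and invalidate \eqref{E:3.7}, so maintaining consistent use of the convention \eqref{E:e3a} throughout is the key technical point.
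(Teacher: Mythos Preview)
Your proposal is correct and follows essentially the same approach as the paper: the paper also derives \eqref{E:3.6} by plugging \eqref{E:2.4} into the Ricci identity for $\varphi$, and then merely states that ``\eqref{E:3.7} is a result of \eqref{E:3.6}'' without further detail. Your explicit route to \eqref{E:3.7} via pairing \eqref{E:3.6} with $\varphi W$, clearing the $\eta$-corrections using \eqref{E:2.2}, \eqref{E:3.1} and $\varepsilon^{2}=1$, and then invoking the pair symmetry of $R$ with a relabeling is a correct and natural way to make that implication precise.
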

\begin{proof}
	The Ricci identity shows that
	\begin{equation*}
		\nabla_X\nabla_Y\varphi-\nabla_Y\nabla_X\varphi-\nabla_{[X, Y]} \varphi=R(X,Y)\varphi-\varphi R(X,Y).
	\end{equation*}
	Computing the left-hand side using \eqref{E:2.4} yields \eqref{E:3.6}. The equation \eqref{E:3.7} is a result of \eqref{E:3.6}.
\end{proof}
Note that the necessary and sufficient condition for a Sasakian pseudo-metric manifold to have constant $\varphi$-sectional curvature $c$ is  \cite{TT}
\begin{align*}
	4R(X,Y)Z=&(c+3\varepsilon)\{g(Y,Z)X-g(X,Z)Y\}\\
	&+(\varepsilon c-1)\{\eta(X)\eta(Z)Y-\eta(Y)\eta(Z)X\}\\
	&+(c-\varepsilon)\{\eta(Y)g(X,Z)\xi-\eta(X)g(Y,Z)\xi+g(X,\varphi Z)\varphi Y\\
	&-g(Y, \varphi Z)\varphi X+2g(X,\varphi Y)\varphi Z\}.
\end{align*}

Here we prove:
\begin{theorem}
	The necessary and sufficient condition for a Kenmotsu pseudo-metric manifold $M$ to have constant $\varphi$-sectional curvature $c$ is
	\begin{align}
		4R(X,Y)Z=&(c-3\varepsilon)\{g(Y,Z)X-g(X,Z)Y\}\nonumber\\
		&+(c+\varepsilon)\{\varepsilon\eta(X)\eta(Z)Y-\varepsilon\eta(Y)\eta(Z)X\nonumber\\
		&+\eta(Y)g(X,Z)\xi-\eta(X)g(Y,Z)\xi+g(X,\varphi Z)\varphi Y\nonumber\\
		&-g(Y, \varphi Z)\varphi X+2g(X,\varphi Y)\varphi Z\}.\label{E:3.8}
	\end{align}
\end{theorem}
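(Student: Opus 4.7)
The plan is to treat sufficiency and necessity separately, with necessity being the substantive direction. For sufficiency, fix $X\in\text{Ker}\,\eta$ with $g(X,X)\neq 0$; substituting $X_1=\varphi X$, $X_2=X$, $X_3=X$ on the right-hand side of \eqref{E:3.8} (read as a formula for $4R(X_1,X_2)X_3$) and invoking $\eta(X)=\eta(\varphi X)=0$, $g(X,\varphi X)=0$, and $g(\varphi X,\varphi X)=g(X,X)$ from \eqref{E:2.2}, every $\eta$-term and every off-diagonal term drops out, and the remainder collapses to $4R(\varphi X, X)X = 4c\,g(X,X)\,\varphi X$. Pairing with $\varphi X$ then yields $K(X,\varphi X)=c$.

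For necessity, let $P(X,Y,Z,W)$ denote the right-hand side of \eqref{E:3.8} viewed as a $(0,4)$-tensor, and set $T := 4\,g(R(\cdot,\cdot)\cdot,\cdot) - P$. I first verify (i) that $P$ has the algebraic symmetries of a Riemann curvature tensor (antisymmetry in each pair, pair exchange, first Bianchi identity), so that $T$ inherits them; (ii) that $P$ obeys the $\varphi$-commutation rule of the preceding lemma, so that the $(1,3)$-version of $T$ commutes with $\varphi$; and (iii) that reading the sufficiency calculation backwards gives $P(X,\varphi X,\varphi X,X)=4c\,g(X,X)^2$ for every $X\in\text{Ker}\,\eta$, so the hypothesis $K(X,\varphi X)=c$ forces $T(X,\varphi X,\varphi X,X)=0$ throughout $\text{Ker}\,\eta$.

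The main obstacle is concluding $T\equiv 0$ on $\text{Ker}\,\eta$ from this vanishing on $\varphi$-sections. This is the classical polarization argument familiar from K\"ahler geometry: substituting $X\mapsto X+Y$ and then $X\mapsto X+\varphi Y$ into $T(X,\varphi X,\varphi X,X)=0$, expanding and comparing coefficients, and reducing the resulting combinations with (i) and (ii), forces $T(X,Y,Z,W)=0$ for all $X,Y,Z,W\in\text{Ker}\,\eta$. The polarization is purely algebraic but intricate, and in the pseudo-metric setting one must also check that the horizontal identity $g(\varphi X,\varphi X)=g(X,X)$ suppresses any $\varepsilon$-sign that would otherwise intrude into the comparison.

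Finally, to extend from $\text{Ker}\,\eta$ to all of $TM$, decompose each argument as $X=X^\perp+\varepsilon\,\eta(X)\,\xi$ with $X^\perp\in\text{Ker}\,\eta$, expand $R(X,Y)Z$ multilinearly, and replace each $\xi$-contribution using \eqref{E:3.1} and \eqref{E:3.2}; a parallel decomposition of $P$ produces the same $\xi$-terms, so $T$ vanishes on all of $TM$, which is \eqref{E:3.8}.
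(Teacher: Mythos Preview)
Your argument is correct and the overall strategy---polarize on $\varphi$-sections inside $\text{Ker}\,\eta$, then extend by the $\xi$-component---is the same as the paper's, but your packaging is genuinely different. The paper works with $R$ itself throughout: it substitutes $U+V$ into $R(U,\varphi U,U,\varphi U)=-c\,g(U,U)^2$, uses the inhomogeneous identities \eqref{E:3.6}--\eqref{E:3.7} directly at every step to obtain first an explicit formula for $R(U,V,U,V)$ on $\text{Ker}\,\eta$ and then, after a second polarization and a Bianchi manipulation, the full \eqref{E:3.8}; the passage to arbitrary $X,Y,Z,W$ is done by replacing the horizontal arguments with $\varphi X,\varphi Y,\varphi Z,\varphi W$ and unwinding via \eqref{E:3.6}, \eqref{E:3.7}, \eqref{E:3.02}. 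Your route instead forms the difference tensor $T=4R-P$, observes that because both $4R$ and $P$ satisfy \eqref{E:3.6} with the \emph{same} inhomogeneous right-hand side, $T$ satisfies the homogeneous K\"ahler-type rule $T(X,Y)\varphi Z=\varphi\,T(X,Y)Z$ (and likewise $T(\varphi X,\varphi Y)=T(X,Y)$), and then invokes the classical K\"ahler polarization lemma wholesale; the extension to $TM$ is by direct $\xi$-decomposition rather than $\varphi$-substitution. Your approach is more conceptual and shorter once the K\"ahler lemma is granted, but it leaves two computations to the reader (that $P$ is an algebraic curvature tensor and that it satisfies the scaled version of \eqref{E:3.6}); the paper's is longer but entirely self-contained and makes every $\varepsilon$-dependence visible.
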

\begin{proof}
	Suppose that $M$ has constant $\varphi$-sectional curvature $c$. Then for all vector fields $U,V\in \text{Ker }\eta$, we have 
	\begin{equation}\label{E:3.9}
	R(U, \varphi U, U, \varphi U)=-cg(U,U)^2.
	\end{equation}
	Using \eqref{E:3.6}, we get
	\begin{align}
		R(U, \varphi V, U, \varphi V)=&R(U, \varphi V, V, \varphi U)+\varepsilon\{g(U,U)g(V,V)\nonumber\\
		&-g(U,V)^2-g(U,\varphi V)^2\},\label{E:3.10}\\
		R(U, \varphi U, V, \varphi U)=&R(U, \varphi U, U, \varphi V),\label{E:3.11}
	\end{align}
	for all $U,V\in \text{Ker }\eta$. Putting $U+V$ in \eqref{E:3.9}, and using\eqref{E:3.7}, \eqref{E:3.10}, \eqref{E:3.11} and the first Bianchi identity, we obtain
	\begin{align*}
		2R(U, \varphi U, U, \varphi V)+2R(V, \varphi V, V, \varphi U)+3R(U, \varphi V, V, \varphi U)
		+R(U,V,U,V)\nonumber\\
		=-c\{2g(U,V)^2+2g(U,U)g(U,V)+2g(U,V)g(V,V)+g(U,U)g(V,V)\}.
	\end{align*}
	Replacing $V$ by $-V$ and then summing the resulting equation to the above equation gives
	\begin{equation}\label{E:3.12}
	3R(U,\varphi V,V,\varphi U)+R(U,V,U,V)=
	-c\{2g(U,V)^2+g(U,U)g(V,V)\}.
	\end{equation}
	Replacing $V$ by $\varphi V$ in \eqref{E:3.12} and then using the identities \eqref{E:3.7} and \eqref{E:3.10}, we get
	\begin{equation}\label{E:3.13}
	4R(U,V,U,V)=(c-3\varepsilon)\{g(U,V)^2-g(U,U)g(V,V)\}-3(c+\varepsilon)g(U,\varphi V)^2.
	\end{equation}
	For $U,V,Z,W\in \text{Ker }\eta$, we determine $R(U+Z,V+W,U+Z,V+W)$ and then using \eqref{E:3.13} we obtain
	\begin{gather}
		4R(U,V,Z,W)+4R(U,W,Z,V)=(c-3\varepsilon)\{g(U,V)g(Z,W)\nonumber\\
		+g(U,W)g(V,Z)-2g(U,Z)g(V,W)\}
		-3(c+\varepsilon)\{g(U,\varphi V)g(Z,\varphi W)\nonumber\\
		+g(U,\varphi W)g(Z, \varphi V)\}.\label{E:3.14}
	\end{gather}
	Interchanging $V$ and $Z$ in \eqref{E:3.14}, and then subtracting the resulting equation with \eqref{E:3.14} and by virtue of the first Bianchi identity we obtain
	\begin{align}
		4&R(U,W,Z,V)=(c-3\varepsilon)\{g(U,V)g(Z,W)-g(U,Z)g(V,W)\}\nonumber\\
		-&(c+\varepsilon)\{g(U,\varphi V)g(Z,\varphi W)-g(U,\varphi Z)g(V,\varphi W)+2g(U,\varphi W)g(Z, \varphi V)\}.\label{E:3.15}
	\end{align}
	Now if $X,Y,Z,W\in TM$, then replacing $U,V,Z,W$ by $\varphi X,\varphi Y,\varphi Z,\varphi W$ in \eqref{E:3.15}, and using \eqref{E:3.6}, \eqref{E:3.7}, and $\eta(R(X,Y)Z)=\eta(Y)g(X,Z)-\eta(X)g(Y, Z)$ we get \eqref{E:3.8}. The converse is trivial.
\end{proof}
\begin{theorem}
	If a Kenmotsu pseudo-metric manifold has constant $\varphi$-sectional curvature $c$, then it is a space of constant curvature and $c=-\varepsilon$.
\end{theorem}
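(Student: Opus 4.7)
The plan is to leverage the $\eta$-Einstein machinery of Section~\ref{S:03}. I first extract the Ricci tensor from the explicit curvature formula \eqref{E:3.8}, show that the result is $\eta$-Einstein with \emph{constant} coefficients, and then use the rigidity statement of Corollary~\ref{c:3.7} to force $c=-\varepsilon$.

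For the contraction I fix a local pseudo-$\varphi$-basis $\{e_i,\varphi e_i,\xi\}$ and apply $Ric(Y,Z)=\sum_i\epsilon_i\,g(R(e_i,Y)Z,e_i)$ to \eqref{E:3.8}. The useful trace identities are $\sum_i\epsilon_i g(Y,e_i)g(Z,e_i)=g(Y,Z)$, $g(\varphi e_i,e_i)=0$ (which annihilates the sixth term of the second bracket), $\sum_i\epsilon_i\eta(e_i)g(\xi,e_i)=1$, and $\sum_i\epsilon_i g(\varphi Y,e_i)g(\varphi Z,e_i)=g(\varphi Y,\varphi Z)=g(Y,Z)-\varepsilon\eta(Y)\eta(Z)$. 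Bookkeeping the $\varepsilon$-signs carefully, the contraction collapses to
\[
Ric(Y,Z)=\frac{(n+1)c+(1-3n)\varepsilon}{2}\,g(Y,Z)-\frac{(n+1)\varepsilon(c+\varepsilon)}{2}\,\eta(Y)\eta(Z),
\]
which is of the $\eta$-Einstein form \eqref{E:4.1} with coefficients $a,b$ that are global constants on $M$ (since $c$ is, by hypothesis).

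Corollary~\ref{c:3.7} now applies: an $\eta$-Einstein Kenmotsu pseudo-metric manifold in which $b$ is constant is Einstein, hence $b=0$. The explicit expression for $b$ then forces $c+\varepsilon=0$, i.e.\ $c=-\varepsilon$. Substituting back into \eqref{E:3.8}, the factor $(c+\varepsilon)$ annihilates the second bracket while $(c-3\varepsilon)=-4\varepsilon$, leaving
\[
R(X,Y)Z=-\varepsilon\bigl\{g(Y,Z)X-g(X,Z)Y\bigr\},
\]
which is precisely the curvature tensor of a space of constant sectional curvature $-\varepsilon$.

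The main obstacle is the contraction step itself: the second bracket of \eqref{E:3.8} has seven pieces, and tracing each one while correctly tracking the indefinite signs (and spotting which traces vanish via the skew-symmetry of $\varphi$) is where the real work lies. A minor caveat is that Corollary~\ref{c:3.7} is stated for $n>1$; in the three-dimensional case $n=1$ one instead appeals directly to the identity $(n-1)Y(a)=\varepsilon\{\xi(b)+2nb\}\eta(Y)$ established in the proof of the preceding proposition, which for $n=1$ still collapses to $2\varepsilon b\eta(Y)=0$ when $b$ is constant, yielding $b=0$ exactly as before.
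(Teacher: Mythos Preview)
Your proof is correct and follows essentially the same route as the paper: contract \eqref{E:3.8} to obtain the $\eta$-Einstein form with $a=\tfrac12\{(n+1)c+(1-3n)\varepsilon\}$ and $b=-\tfrac12\varepsilon(n+1)(c+\varepsilon)$ (these agree with the paper's values), then invoke Corollary~\ref{c:3.7} to force $b=0$ and hence $c=-\varepsilon$. Your argument is in fact slightly more complete, since you explicitly substitute $c=-\varepsilon$ back into \eqref{E:3.8} to exhibit the constant-curvature form, and you address the three-dimensional case separately; the paper's Corollary~\ref{c:3.7} is actually stated without a dimension restriction (the $n>1$ hypothesis belongs only to the preceding proposition), so that caveat, while harmless, is not strictly needed.
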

\begin{proof}
	From \eqref{E:3.8}, it is easy to obtain \eqref{E:4.1}, where $a=\frac{1}{2}(n(c-3\varepsilon)+(c+\varepsilon))$ and $b=\frac{-1}{2}\varepsilon(n+1)(c+\varepsilon)$. Since $a$ and $b$ are constants, from Corollary~\ref{c:3.7} it follows that $c=-\varepsilon$.
\end{proof}


\section{Some structure theorems}\label{S:05}
The tangent space $T_pM$ of an almost contact pseudo-metric manifold $M$  can be decomposed as $T_pM=\varphi (T_pM)\oplus L(\xi_p)$, where $L(\xi_p)$ is the linear subspace of $T_pM$ generated by $\xi_p$. Thus the conformal curvature tensor $C$ is defined as a map
\[
C:T_pM \times T_pM \times T_pM \rightarrow \varphi (T_pM)\oplus L(\xi_p), \qquad p\in M,
\]
such that
\begin{align}\label{E:04.01}
	C(X,Y)Z=&R(X,Y)Z-\frac{1}{2n-1}\{Ric(Y, Z)X+g(Y, Z)QX-Ric(X,Z)Y\nonumber \\
	&-g(X, Z)QY\}+\frac{r}{2n(2n-1)}\{g(Y, Z)X-g(X, Z)Y\}.
\end{align}
Then there arise three cases:
\begin{itemize}
	\item The projection of the image of $C$  in $\varphi (T_pM)$ is zero, that is,
	\begin{equation}\label{E:04.02}
	C(X,Y,Z,\varphi W)=0, \qquad \text{ for any }X,Y,Z,W\in T_pM.
	\end{equation}
	\item Projection of the image of $C$ in $L(\xi_p)$ is zero, that is,
	\begin{equation}\label{E:04.03}
	C(X,Y)\xi=0, \qquad \text{ for all }X,Y\in T_pM.
	\end{equation}
	\item Projection of the image of $C\mid_{\varphi(T_pM) \times \varphi(T_pM) \times \varphi(T_pM)}$ in $\varphi (T_pM)$ is zero, that is,
	\begin{equation}\label{E:04.04}
	\varphi^2C(\varphi X, \varphi Y)\varphi Z=0, \qquad \text{ for all }X,Y,Z\in T_pM.
	\end{equation}
\end{itemize}
An almost contact pseudo-metric manifold satisfying the cases \eqref{E:04.02}, \eqref{E:04.03} and \eqref{E:04.04} are said to be conformally symmetric \cite{Zhen}, $\xi$-conformally
flat \cite{ZhenCab} and $\varphi$-conformally flat \cite{CFFZ}, respectively. 

We begin with the following:
\begin{theorem}\label{T:4.0}
	Let $M$ be a $\xi$-conformally
flat Kenmotsu pseudo-metric manifold of dimension higher than 3. Then the scalar curvature $r$ of $M$ satisfies
	\begin{equation}\label{E:4.01}
	D r=\varepsilon\xi(r)\xi,
	\end{equation}
	where $D$ denotes gradient.
\end{theorem}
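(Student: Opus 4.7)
The plan is to reduce the problem to the earlier $\eta$-Einstein machinery. Specifically, I will show that $\xi$-conformal flatness forces $(M,g)$ to be $\eta$-Einstein, and then invoke the proposition preceding Corollary~\ref{c:3.7} (equation \eqref{E:4.6}) to extract the desired information about $Dr$.

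First, I would plug $Z=\xi$ into the defining formula \eqref{E:04.01} of $C$ and impose $C(X,Y)\xi=0$. The ingredients needed are all available from Section~\ref{S:03}: $R(X,Y)\xi=\eta(X)Y-\eta(Y)X$ from \eqref{E:3.1}, the Ricci-on-$\xi$ identity $Ric(X,\xi)=-2n\eta(X)$ together with $Q\xi=-2n\varepsilon\xi$ from \eqref{E:3.3}, and $g(X,\xi)=\varepsilon\eta(X)$ from \eqref{E:e3a}. Substituting these and collecting coefficients of $\eta(X)Y$, $\eta(Y)X$, $\eta(X)QY$, $\eta(Y)QX$, the equation $C(X,Y)\xi=0$ collapses to an identity of the shape $\eta(X)QY-\eta(Y)QX = (\varepsilon+r/(2n))\{\eta(X)Y-\eta(Y)X\}$.

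Setting $Y=\xi$ in this relation and using $Q\xi=-2n\varepsilon\xi$, $\eta(\xi)=1$ isolates $QX$ and produces exactly the $\eta$-Einstein form $QX=aX+b\varepsilon\eta(X)\xi$ with $a$ and $b$ as in \eqref{E:4.3}. Hence $M$ is $\eta$-Einstein. Since $\dim M>3$ is equivalent to $n>1$, the proposition preceding Corollary~\ref{c:3.7} applies and yields $X(b)+2b\eta(X)=0$. As $b=-\varepsilon r/(2n)-(2n+1)$ depends only on $r$, we have $X(b)=-\varepsilon X(r)/(2n)$, so the displayed equation rearranges to $X(r)=4n\varepsilon b\,\eta(X)$; that is, $X(r)$ is a scalar multiple of $\eta(X)$.

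Evaluating this last identity at $X=\xi$ (where $\eta(\xi)=1$) identifies the scalar as $\xi(r)$, giving $X(r)=\xi(r)\eta(X)$ for every vector field $X$. Since $X(r)=g(Dr,X)$ and $\eta(X)=\varepsilon g(\xi,X)$, this is the same as $g(Dr,X)=g(\varepsilon\xi(r)\xi,X)$ for all $X$, whence $Dr=\varepsilon\xi(r)\xi$. The only delicate step is the first one: the algebraic reduction of $C(X,Y)\xi=0$ requires careful bookkeeping of the $\varepsilon$-factors and the pairs $(X,Y)\leftrightarrow(Y,X)$ of terms, but it is otherwise a direct expansion, and the remainder of the argument is just the application of already-established results.
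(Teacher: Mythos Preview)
Your argument is correct, but it takes a different route from the paper. The paper works directly with the $\xi$-conformal flatness condition: it writes out $R(U,V)\xi$ and $R(U,\xi)V$ from $C(X,Y)\xi=0$, covariantly differentiates $R(U,\xi)\xi$, contracts, and compares with the contracted second Bianchi identity $\sum_i\varepsilon_i g((\nabla_{e_i}R)(Y,\xi)\xi,e_i)=g((\nabla_YQ)\xi-(\nabla_\xi Q)Y,\xi)$ to obtain $Yr=\eta(Y)\xi(r)$ after observing that the left side vanishes by~\eqref{E:3.3}.

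You instead anticipate (the forward direction of) Theorem~\ref{T:4.1}, showing that $\xi$-conformal flatness forces the $\eta$-Einstein form~\eqref{E:4.4}, and then feed this into the earlier proposition~\eqref{E:4.6} to read off $X(r)=\xi(r)\eta(X)$. This is more economical: the contracted Bianchi identity has already been used once (as $\operatorname{div}Q=\tfrac12 Dr$) in deriving~\eqref{E:4.6}, and you avoid repeating that type of computation. In effect you have inverted the paper's logical order: the paper proves Theorem~\ref{T:4.0} directly and later combines it with Theorem~\ref{T:4.1} to obtain the lemma ``$\eta$-Einstein and $n>1$ implies $Dr=\varepsilon\xi(r)\xi$''; you prove that lemma first (via~\eqref{E:4.6}) and combine it with one half of Theorem~\ref{T:4.1}. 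There is no circularity, since neither~\eqref{E:4.6} nor the forward implication of Theorem~\ref{T:4.1} relies on Theorem~\ref{T:4.0}.
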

\begin{proof}
	Since $M$ is $\xi$-conformally
flat, from \eqref{E:04.03} the equation \eqref{E:04.01} becomes
	\begin{align}\label{E:4.02}
		R(U, V)\xi=&\frac{1}{2n-1}\{Ric(V, \xi)U+\varepsilon\eta(V)QU-Ric(U,
		\xi)V-\varepsilon\eta(U)QV\}\nonumber\\
		&-\frac{\varepsilon r}{2n(2n-1)}\{\eta(V)U-\eta(U)V\},
	\end{align}
	for any $U,V\in TM$, and this further gives
	\begin{align}\label{E:4.03}
		R(U, \xi)V=&\frac{1}{2n-1}\{g(V, Q\xi)U+\varepsilon\eta(V)QU-g(QU,V)\xi-g(U,V)Q\xi\}\nonumber\\
		&-\frac{r}{2n(2n-1)}\{\varepsilon\eta(V)U-g(U,V)\xi\}.
	\end{align}
	Putting $V=\xi$ in \eqref{E:4.02}, then differentiating it covariently along $W$ and using \eqref{E:4.03}, we get:
	\begin{align*}
		(\nabla_WR)(U,\xi)\xi=&\frac{1}{2n-1}\{g((\nabla_WQ)\xi,\xi)U+\varepsilon(\nabla_WQ)U-g((\nabla_WQ)U,\xi)\xi\\
		&-\varepsilon\eta(U)(\nabla_WQ)\xi\}-\frac{Wr}{2n(2n-1)}\{\varepsilon U-\varepsilon\eta(U)\xi\}.
	\end{align*}
	Taking the inner product of the above equation with $Y$ and contracting with respect to $U$ and $W$ yield
	\begin{align}\label{E:4.04}
		\sum_{i=1}^{2n+1}\varepsilon_i g((\nabla_{e_i}R)(e_i,\xi)\xi,Y)=&\frac{1}{2n-1}\{g((\nabla_YQ)\xi-(\nabla_\xi Q)Y,\xi)\}\nonumber\\
		&+\frac{\varepsilon(2n-2)}{4n(2n-1)}\{Yr-\eta(Y)\xi(r)\},
	\end{align}
	where $\{e_i\}$ is a pseudo-orthonormal basis in $M$ and  $\varepsilon_i=g(e_i,e_i)$. From the second Bianchi identity we easily obtain
	\begin{equation}\label{E:4.05}
	\sum_{i=1}^{2n+1}\varepsilon_i g((\nabla_{e_i} R)(Y,\xi)\xi,e_i)=g((\nabla_Y Q)\xi-(\nabla_\xi Q)Y,\xi).
	\end{equation} 
	Then from \eqref{E:4.04} and \eqref{E:4.05}, noting that $n>1$ we get
	\begin{equation*}
		g((\nabla_Y Q)\xi-(\nabla_\xi Q)Y,\xi)=\frac{\varepsilon}{4n}\{Yr-\eta(Y)\xi(r)\}.
	\end{equation*}
	Since $\nabla Q$ is symmetric, the above equation becomes
	\begin{equation}\label{E:4.06}
	g((\nabla_Y Q)\xi,\xi)-g((\nabla_\xi Q)\xi,Y)=\frac{\varepsilon}{4n}\{Yr-\eta(Y)\xi(r)\}.
	\end{equation}
	From \eqref{E:3.3}, the left hand side of above equation vanishes. Then \eqref{E:4.06} leads to $Yr=\eta(Y)\xi(r)$ which gives \eqref{E:4.01}.
\end{proof}
\begin{theorem}\label{T:4.1}
	A Kenmotsu pseudo-metric manifold $M$ is $\xi$-conformally
flat if and only if it is an $\eta$-Einstein manifold.
\end{theorem}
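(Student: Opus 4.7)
The plan is to argue the equivalence by direct substitution into the defining formula \eqref{E:04.01} for the conformal curvature tensor, evaluated at $Z=\xi$. Indeed, putting $Z=\xi$ and using the already-established identities $R(X,Y)\xi=\eta(X)Y-\eta(Y)X$ from \eqref{E:3.1}, $Ric(X,\xi)=-2n\eta(X)$ from \eqref{E:3.3}, and $g(X,\xi)=\varepsilon\eta(X)$ from \eqref{E:e3a}, every explicit term in $C(X,Y)\xi$ collapses to either a scalar multiple of $\eta(Y)X-\eta(X)Y$ or the single ``obstruction'' $\varepsilon\eta(Y)QX-\varepsilon\eta(X)QY$. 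After combining the scalar coefficients (the $-1$ coming from $R(X,Y)\xi$, the $2n/(2n-1)$ from the $Ric(\cdot,\xi)$ terms, and the $\varepsilon r/[2n(2n-1)]$ from the last bracket), the condition $C(X,Y)\xi=0$ is equivalent to a single tensorial identity of the shape
\[
\frac{2n+\varepsilon r}{2n}\bigl(\eta(Y)X-\eta(X)Y\bigr)=\varepsilon\bigl(\eta(Y)QX-\eta(X)QY\bigr).
\]

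For the forward direction I would then specialize $Y=\xi$: this kills the $\eta(X)QY$ term (reducing it to $\eta(X)Q\xi=-2n\varepsilon\eta(X)\xi$ by \eqref{E:3.3}) and isolates $QX$ as a linear combination of $X$ and $\eta(X)\xi$, namely $QX=(\frac{r}{2n}+\varepsilon)X-((2n+1)\varepsilon+\frac{r}{2n})\eta(X)\xi$. Taking $g(\cdot,Y)$ and using $g(\xi,Y)=\varepsilon\eta(Y)$ yields exactly \eqref{E:4.4}, so $M$ is $\eta$-Einstein. For the converse, assuming $M$ is $\eta$-Einstein, the already-derived Proposition (equation \eqref{E:4.4}) gives $QX=aX+\varepsilon b\eta(X)\xi$ with $a,b$ as in \eqref{E:4.3}; substituting this $Q$ back into the displayed identity above, the right-hand side becomes $(\varepsilon a-2n)(\eta(Y)X-\eta(X)Y)$ with no $\xi$-part surviving (the two $b\eta(X)\eta(Y)\xi$ contributions cancel), and a short check using $\varepsilon a=\frac{\varepsilon r}{2n}+1$ shows that $2n-2n\varepsilon a+\varepsilon r=0$, so $C(X,Y)\xi$ vanishes identically.

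The proof is therefore essentially algebraic, and I do not expect any geometric obstacle. The only bookkeeping point requiring care is tracking the sign factor $\varepsilon$ through the two places where $g(\xi,\cdot)$ is converted to $\eta(\cdot)$ (once in contracting $g(Y,\xi)QX$ against the formula for $Q$, and once in converting $g(QX,Y)$ back to $Ric(X,Y)$); as the author's Proposition on $\eta$-Einstein manifolds already encodes the forced relation $\varepsilon a+b=-2n$, no independent contraction argument is needed to match coefficients.
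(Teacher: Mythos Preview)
Your proposal is correct and follows essentially the same route as the paper: both directions proceed by substituting $Z=\xi$ into \eqref{E:04.01}, invoking \eqref{E:3.1}, \eqref{E:3.3}, and \eqref{E:e3a}, and then for the forward implication specializing further to $Y=\xi$ to isolate $Q$, while for the converse plugging the $\eta$-Einstein form of $Q$ back in and checking the coefficient vanishes. The only (inconsequential) slip is that after substituting $QX=aX+\varepsilon b\,\eta(X)\xi$ into your displayed identity the right-hand side is $\varepsilon a\,(\eta(Y)X-\eta(X)Y)$, not $(\varepsilon a-2n)(\eta(Y)X-\eta(X)Y)$; your final check $2n-2n\varepsilon a+\varepsilon r=0$ is nonetheless exactly the correct vanishing condition.
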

\begin{proof}
	If $M$ is $\xi$-conformally
flat, then
	\begin{align*}
		R(X,\xi)\xi=&\frac{1}{2n-1}\{Ric(\xi, \xi)X+\varepsilon QX-Ric(X,
		\xi)\xi-\varepsilon \eta(X)Q\xi\}\\
		&-\frac{\varepsilon r}{2n(2n-1)}\{X-\eta(X)\xi\}.
	\end{align*}
	Making use of equations \eqref{E:3.1} and \eqref{E:3.3} in above gives 
	\begin{equation*}
		Q=\left( \frac{r}{2n}+\varepsilon\right)I-\left( \frac{\varepsilon r}{2n}+2n+1\right)\varepsilon \eta\otimes\xi,
	\end{equation*}
	which is equivalent to \eqref{E:4.4}.
	
	Conversely, suppose that $M$ is $\eta$-Einstein. Formula \eqref{E:04.01} gives
	\begin{align*}
		C(X,Y)\xi=&R(X,Y)\xi-\frac{1}{2n-1}\{Ric(Y, \xi)X+\varepsilon\eta(Y)QX-Ric(X,
		\xi)Y\nonumber \\
		&-\varepsilon\eta(X)QY\} +\frac{\varepsilon r}{2n(2n-1)}\{\eta(Y)X-\eta(X)Y\}.
	\end{align*}
	Now using identities \eqref{E:3.1}, \eqref{E:3.3} and \eqref{E:4.7} results in
	\begin{align*}
		C(X,Y)\xi&=R(X,Y)\xi-\frac{1}{2n-1}\left\{ (2n-\varepsilon a)+\frac{\varepsilon r}{2n}\right\}(\eta(X)Y-\eta(Y)X)\\
		&=R(X,Y)\xi-(\eta(X)Y-\eta(Y)X)=0,
	\end{align*}
	and this concludes the proof.
\end{proof}

\begin{theorem}
	A Kenmotsu pseudo-metric manifold of dimension higher than 3 is  $\varphi$-conformally
flat if and only if it is a space of constant cuvature $-\varepsilon$.
\end{theorem}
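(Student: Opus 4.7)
The easier direction is a straightforward calculation: if $M$ has constant sectional curvature $-\varepsilon$, then $QX=-2n\varepsilon X$ and $r=-2n(2n+1)\varepsilon$, and substituting these into the definition \eqref{E:04.01} shows $C\equiv 0$, so \eqref{E:04.04} holds trivially. The content of the theorem lies in the converse direction.

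For the converse, my first step is to rewrite \eqref{E:04.04} as a scalar identity. Since $\varphi^2 A=0$ exactly when $A\in L(\xi)$, and since $\varphi$ surjects $TM$ onto $\varphi(T_pM)$, \eqref{E:04.04} is equivalent to $g(C(U,V)W,Z)=0$ for all $U,V,W,Z\in\varphi(T_pM)$. Spelling this out using \eqref{E:04.01}, I will contract by picking a local pseudo-orthonormal basis $\{u_k\}_{k=1}^{2n}$ of $\varphi(T_pM)$, setting $V=W=u_k$, weighting by $\varepsilon_k=g(u_k,u_k)$, and summing. The discrepancy between the full $(2n+1)$-dimensional Ricci trace and the $2n$-dimensional trace over $\varphi(T_pM)$ is computed using \eqref{E:3.2} and \eqref{E:3.3}; after bookkeeping the cancellations, the expected output is
\[
Ric(U,W)=\left(\varepsilon+\frac{r}{2n}\right)g(U,W),\qquad U,W\in\varphi(T_pM).
\]
Combined with \eqref{E:3.3}, this is precisely \eqref{E:4.4}, so $M$ is $\eta$-Einstein.

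Feeding this $\eta$-Einstein relation back into $g(C(U,V)W,Z)=0$ collapses all Ricci terms into multiples of $g$, leaving
\[
g(R(U,V)W,Z)=k\{g(V,W)g(U,Z)-g(U,W)g(V,Z)\}
\]
on $\varphi(T_pM)$ with $k=(4n\varepsilon+r)/(2n(2n-1))$. To pin down $k$, I apply \eqref{E:3.7} with $X,Y\in\varphi(T_pM)$ (so that $\varphi X,\varphi Y$ also lie in $\varphi(T_pM)$) and evaluate both sides via the constant-$k$ formula. Using $g(\varphi A,B)=-g(A,\varphi B)$ the identity collapses to
\[
(k+\varepsilon)\{g(Y,\varphi Z)g(X,\varphi W)-g(X,\varphi Z)g(Y,\varphi W)-g(Y,Z)g(X,W)+g(X,Z)g(Y,W)\}=0.
\]
Since $n>1$, I can choose $X,Y\in\varphi(T_pM)$ so that $\{X,Y,\varphi X,\varphi Y\}$ is pseudo-orthonormal and then set $Z=X$, $W=Y$ to make the bracket equal to $\pm 1$. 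This forces $k=-\varepsilon$ and hence $r=-2n(2n+1)\varepsilon$. Combined with \eqref{E:3.1} and the orthogonal decomposition $T_pM=\varphi(T_pM)\oplus L(\xi_p)$, this extends $R(X,Y)Z=-\varepsilon\{g(Y,Z)X-g(X,Z)Y\}$ to all of $TM$.

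The main obstacle I anticipate is the first contraction: in the pseudo-Riemannian setting the restricted trace over $\varphi(T_pM)$ must be carefully disentangled from the $\xi$-direction contribution, and the computation involves several sign cancellations arising from $\varepsilon^2=1$ versus $g(\xi,\xi)=\varepsilon$. The subsequent step, forcing $k=-\varepsilon$ via \eqref{E:3.7}, is where the dimensional hypothesis $\dim M>3$ is essential: for $n=1$, $\varphi(T_pM)$ is only two-dimensional and the four vectors $X,Y,\varphi X,\varphi Y$ cannot be made linearly independent, so the bracket vanishes identically and the argument becomes silent.
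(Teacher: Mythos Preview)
Your proposal is correct and follows essentially the same route as the paper: rewrite \eqref{E:04.04} as a scalar identity on $\varphi(T_pM)$, contract to obtain the $\eta$-Einstein relation, feed this back to get the constant-$k$ formula \eqref{E:04.07}, and then use the curvature identity of the Lemma to force $k=-\varepsilon$ (the paper contracts \eqref{E:3.6} over a basis to reach $(2n-2)(k+\varepsilon)g(Y,\varphi Z)=0$, while you evaluate \eqref{E:3.7} at a pseudo-orthonormal quadruple $\{X,Y,\varphi X,\varphi Y\}$ --- both arguments exploit $n>1$ in the same spirit). The extension of the curvature formula from $\varphi(T_pM)$ to all of $TM$ that you sketch at the end is carried out in the paper via the intermediate identity \eqref{E:04.08}--\eqref{E:04.09}, but your direct use of \eqref{E:3.1}--\eqref{E:3.2} achieves the same result.
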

\begin{proof}
	Note that the $\varphi$-conformally
flat condition $\varphi^2C(\varphi X, \varphi Y)\varphi Z=0$ is equivalent to $C(\varphi X, \varphi Y, \varphi Z, \varphi W)=0$, and so from \eqref{E:04.01} we get
	\begin{align}\label{E:04.05}
		R(\varphi X&, \varphi Y, \varphi Z, \varphi W)=\frac{1}{2n-1}\{Ric(\varphi Y, \varphi Z)g(\varphi X, \varphi W)+g(\varphi Y, \varphi Z)Ric(\varphi X, \varphi W)\nonumber\\
		&-Ric(\varphi X, \varphi Z)g(\varphi Y, \varphi W)-g(\varphi X, \varphi Z)Ric(\varphi Y, \varphi W)\}\nonumber\\
		&-\frac{r}{2n(2n-1)}\{g(\varphi Y, \varphi Z)g(\varphi X, \varphi W)-g(\varphi X, \varphi Z)g(\varphi Y, \varphi W)\}.
	\end{align} 
	Let $\{E_i=e_i, E_{n+i}=\varphi e_i,E_{2n+1}=\xi\}_{i=1}^{n}$ be a local pseudo-orthonormal $\varphi$-basis. Taking $X=W=E_i$ in \eqref{E:04.05} and summing, we get
	\begin{align}\label{E:04.06}
		\sum_{i=1}^{2n} \varepsilon_i R(&\varphi E_i, \varphi Y, \varphi Z, \varphi E_i)\nonumber\\
		=&\sum_{i=1}^{2n}\varepsilon_i\left[\frac{1}{2n-1} \{Ric(\varphi Y, \varphi Z)g(\varphi E_i, \varphi E_i)+g(\varphi Y, \varphi Z)Ric(\varphi E_i, \varphi E_i)\right. \nonumber\\
		&-Ric(\varphi E_i, \varphi Z)g(\varphi Y, \varphi E_i)-g(\varphi E_i, \varphi Z)Ric(\varphi Y, \varphi E_i)\}\nonumber\\
		&\left. -\frac{r}{2n(2n-1)}\{g(\varphi Y, \varphi Z)g(\varphi E_i, \varphi E_i)-g(\varphi E_i, \varphi Z)g(\varphi Y, \varphi E_i)\}\right] \nonumber\\
		=& \left( \frac{2n-2}{2n-1}\right)  Ric(\varphi Y, \varphi Z)+\frac{1}{2n-1}\left(\frac{r}{2n}+\varepsilon 2n \right)g(\varphi Y, \varphi Z), 
	\end{align}
	where $\varepsilon_i=g(E_i,E_i)$. It can be easily verified that
	\begin{align*}
		\sum_{i=1}^{2n} \varepsilon_i R(\varphi E_i, \varphi Y, \varphi Z, \varphi E_i)&=Ric(\varphi Y, \varphi Z)-\varepsilon R(\xi, \varphi Y, \varphi Z, \xi)\\
		&=Ric(\varphi Y, \varphi Z)+\varepsilon g(\varphi Y, \varphi Z).
	\end{align*}
	So that equation \eqref{E:04.06} becomes
	\begin{equation*}
		Ric(\varphi Y, \varphi Z)=\left(\varepsilon+\frac{r}{2n} \right) g(\varphi Y, \varphi Z).
	\end{equation*}
	Substituting this in \eqref{E:04.05}, one obtains 
	\begin{equation}\label{E:04.07}
	R(\varphi X, \varphi Y, \varphi Z, \varphi W)=\frac{r+4n\varepsilon}{2n(2n-1)}\{g(\varphi Y, \varphi Z)g(\varphi X, \varphi W)-g(\varphi X, \varphi Z)g(\varphi Y, \varphi W)\}.
	\end{equation}
	From \eqref{E:3.7}, \eqref{E:3.6}, \eqref{E:3.02} and \eqref{E:2.2}, we get
	\begin{align}\label{E:04.08}
		R(\varphi X, \varphi Y, \varphi Z, \varphi W)=&R(X,Y,Z,W)+\eta(Y)\eta(Z)g(X,W)-\eta(X)\eta(Z)g(Y,W)\nonumber\\
		&-\eta(Y)\eta(W)g(X,Z)+\eta(X)\eta(W)g(Y,Z).
	\end{align}
	Now \eqref{E:04.07} and \eqref{E:04.08} imply 
	\begin{align}\label{E:04.09}
		R(X,Y,Z,W)=&\frac{r+4n\varepsilon}{2n(2n-1)}\{g(\varphi Y, \varphi Z)g(\varphi X, \varphi W)-g(\varphi X, \varphi Z)g(\varphi Y, \varphi W)\}\nonumber\\
		&-\eta(Y)\eta(Z)g(X,W)+\eta(X)\eta(Z)g(Y,W)\nonumber\\
		&+\eta(Y)\eta(W)g(X,Z)-\eta(X)\eta(W)g(Y,Z).
	\end{align}
	Now taking the scalar product of \eqref{E:3.6} with $W$ and by virtue of \eqref{E:04.09} we get an equation and then contracting the resulting equation with respect to $X$ and $W$ gives
	\begin{equation*}
		(2n-2)\left(\frac{r+4n\varepsilon}{2n(2n-1)}+\varepsilon \right) g(Y,\varphi Z)=0.
	\end{equation*}
	Since $n>1$, it follows that
	\begin{equation}\label{E:04.10}
	r=-\varepsilon2n(2n+1).
	\end{equation}
	Using \eqref{E:04.10} and \eqref{E:2.2} in \eqref{E:04.09}, we get
	\begin{equation*}
		R(X,Y,Z,W)=-\varepsilon \{g(Y,Z)g(X, W)-g(X, Z)g(Y, W)\},
	\end{equation*}
	and so that the manifold is of constant curvature $-\varepsilon$.
	\par The converse is trivial.
\end{proof}

\begin{corollary}
	A conformally flat Kenmotsu pseudo-metric manifold of dimension higher than 3 is a space of constant cuvature $-\varepsilon$.
\end{corollary}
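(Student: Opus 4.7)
The statement is a direct corollary of the previous theorem, so the plan is essentially a one-step reduction rather than a fresh calculation.

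The key observation is that the three notions of ``partial'' conformal flatness stated in Section~\ref{S:05} (conformally symmetric, $\xi$-conformally flat, and $\varphi$-conformally flat) are all weaker than the full conformal flatness condition $C \equiv 0$. In particular, if $M$ is conformally flat, then $C(X,Y)Z = 0$ for all $X,Y,Z \in T_pM$, and so trivially $\varphi^2 C(\varphi X, \varphi Y)\varphi Z = 0$ for all $X,Y,Z$. Thus every conformally flat Kenmotsu pseudo-metric manifold is $\varphi$-conformally flat.

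Given this, the proof I would write consists of invoking the previous theorem: since $M$ has dimension $2n+1 > 3$ (i.e.\ $n > 1$), and $M$ is $\varphi$-conformally flat, the theorem just proved implies that $M$ is a space of constant curvature $-\varepsilon$. No new computation is needed. The converse direction (a space of constant curvature $-\varepsilon$ has vanishing conformal curvature tensor in dimension $\geq 4$, hence is conformally flat) is standard and can be dismissed as trivial, exactly as was done in the preceding theorem.

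There is essentially no obstacle here; the only thing to verify is the implication ``conformally flat $\Rightarrow$ $\varphi$-conformally flat,'' which is immediate from the definitions \eqref{E:04.03} and \eqref{E:04.04}. So the corollary is genuinely a one-line consequence, and the proof proposal is simply: observe that $C \equiv 0$ implies \eqref{E:04.04}, then apply the preceding theorem.
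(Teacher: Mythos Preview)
Your proposal is correct and matches the paper's intent: the corollary is stated without proof precisely because it follows immediately from the preceding theorem via the trivial implication $C\equiv 0 \Rightarrow \varphi^2 C(\varphi X,\varphi Y)\varphi Z = 0$. (Minor slip: at the end you cite \eqref{E:04.03} when you mean \eqref{E:04.04}, and the corollary is a one-way implication so the converse remark is unnecessary.)
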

The above corollary for Riemannian case has been proved in \cite{Kenm}.

Now contracting \eqref{E:04.09}, we obtain \eqref{E:4.4}. Thus we can state the following:
\begin{corollary}\label{C:4.4}
	A $\varphi$-conformally
flat Kenmotsu pseudo-metric manifold is an $\eta$-Einstein manifold.
\end{corollary}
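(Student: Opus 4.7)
The plan is to derive the $\eta$-Einstein condition \eqref{E:4.4} directly from equation \eqref{E:04.09}, which, as established in the proof of the preceding theorem, is a pointwise consequence of $\varphi$-conformal flatness on any Kenmotsu pseudo-metric manifold. The strategy is therefore to contract \eqref{E:04.09} over the first and last slots using a local pseudo-orthonormal basis $\{E_i\}_{i=1}^{2n+1}$ with $E_{2n+1}=\xi$, $\varepsilon_i=g(E_i,E_i)$, and read off the Ricci tensor.

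First I would set $X=W=E_i$ in \eqref{E:04.09}, multiply by $\varepsilon_i$, and sum. The left-hand side collapses to $Ric(Y,Z)$ by the definition of the Ricci tensor. For the right-hand side the relevant traces are all standard: $\sum_i\varepsilon_i g(E_i,E_i)=2n+1$, $\sum_i\varepsilon_i g(E_i,V)g(E_i,U)=g(U,V)$, $\sum_i\varepsilon_i\eta(E_i)g(E_i,V)=\eta(V)$, and $\sum_i\varepsilon_i\eta(E_i)^2=\varepsilon$. Combined with the identity $g(\varphi Y,\varphi Z)=g(Y,Z)-\varepsilon\eta(Y)\eta(Z)$ from \eqref{E:2.2}, these imply $\sum_i\varepsilon_i g(\varphi E_i,\varphi E_i)=2n$ and $\sum_i\varepsilon_i g(\varphi E_i,\varphi Z)g(\varphi Y,\varphi E_i)=g(\varphi Y,\varphi Z)$.

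Substituting these into the contraction of \eqref{E:04.09}, the first bracket contributes $\frac{r+4n\varepsilon}{2n}\,g(\varphi Y,\varphi Z)$, while the four $\eta$-terms collapse to $-(2n-1)\eta(Y)\eta(Z)-\varepsilon g(Y,Z)$. Expanding $g(\varphi Y,\varphi Z)=g(Y,Z)-\varepsilon\eta(Y)\eta(Z)$ and collecting the coefficients of $g(Y,Z)$ and $\eta(Y)\eta(Z)$ then yields precisely
\[
Ric(Y,Z)=\Bigl(\frac{r}{2n}+\varepsilon\Bigr)g(Y,Z)-\Bigl(\frac{\varepsilon r}{2n}+2n+1\Bigr)\eta(Y)\eta(Z),
\]
which is \eqref{E:4.4}, proving that $M$ is $\eta$-Einstein.

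There is no real obstacle here; the argument is a straightforward trace computation, and the only care required is to keep the signs $\varepsilon_i$ and $\varepsilon$ consistent when reducing sums involving $\eta(E_i)$ and $\varphi E_i$. The main bookkeeping step, namely verifying $\sum_i\varepsilon_i g(\varphi E_i,\varphi Z)g(\varphi Y,\varphi E_i)=g(\varphi Y,\varphi Z)$, is the one worth writing out explicitly, since all the $\varepsilon$-signs land exactly so as to reproduce the $\varphi$-compatibility relation in the limit.
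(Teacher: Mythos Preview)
Your proposal is correct and follows exactly the route indicated in the paper, which merely states that contracting \eqref{E:04.09} yields \eqref{E:4.4}; you have supplied the details of that contraction accurately, including the correct handling of the $\varepsilon_i$ and $\varepsilon$ signs. There is nothing to add.
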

In view of Theorem~\ref{T:4.1} and Corrollary~\ref{C:4.4}, we have the following:
\begin{corollary}
	A $\varphi$-conformally
flat Kenmotsu pseudo-metric manifold is always  $\xi$-conformally
flat.
\end{corollary}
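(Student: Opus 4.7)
The plan is to chain together the two preceding results directly, since the corollary is essentially an immediate logical consequence of them. First I would invoke Corollary~\ref{C:4.4}, which tells us that any $\varphi$-conformally flat Kenmotsu pseudo-metric manifold $M$ is $\eta$-Einstein, i.e.\ its Ricci tensor has the form \eqref{E:4.4}. This step is already done for us and requires no new computation.

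Second, I would apply Theorem~\ref{T:4.1}, which provides the equivalence between being $\xi$-conformally flat and being $\eta$-Einstein. Since $M$ is $\eta$-Einstein by the previous step, the converse direction of Theorem~\ref{T:4.1} (whose proof in the text explicitly shows $C(X,Y)\xi=0$ using \eqref{E:3.1}, \eqref{E:3.3} and \eqref{E:4.7}) immediately gives the $\xi$-conformally flat condition \eqref{E:04.03}.

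There is no real obstacle here beyond verifying that the two cited results apply without restriction on the dimension. One should note that Corollary~\ref{C:4.4} was proved without a dimensional restriction (contraction of \eqref{E:04.09} yields \eqref{E:4.4} directly), and Theorem~\ref{T:4.1} likewise holds in all dimensions of the form $2n+1$ with $n\geq 1$, so the implication $\varphi$-conformally flat $\Rightarrow$ $\eta$-Einstein $\Rightarrow$ $\xi$-conformally flat is valid without further hypotheses. Thus the proof reduces to a one-line citation of Corollary~\ref{C:4.4} followed by Theorem~\ref{T:4.1}, and no new calculations are required.
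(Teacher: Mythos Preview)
Your proposal is correct and matches the paper's own reasoning exactly: the paper simply states the corollary ``In view of Theorem~\ref{T:4.1} and Corollary~\ref{C:4.4}'' with no further argument, which is precisely the chain $\varphi$-conformally flat $\Rightarrow$ $\eta$-Einstein $\Rightarrow$ $\xi$-conformally flat that you outline.
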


\section{Ricci soliton on Kenmotsu pseudo-metric manifolds}\label{S:06}
A \textit{Ricci soliton} on a pseudo-Riemannian manifold $(M,g)$ is defined by 
\begin{equation}\label{E:00}
	(\pounds_V g)(X,Y)+2 Ric(X,Y)+2\lambda g(X,Y)=0,
\end{equation}
where $\lambda$ is a constant. Ricci soliton is a natural generalization of the Einstein metric (that is, $Ric(X,Y)=ag(X,Y)$, for some constant $a$), and  is a special self similar solution of  Hamilton's Ricci flow (see \cite{Ham}) $\frac{\partial}{\partial t}g(t)=-2Ric(t)$ with initial condition $g(0)=g$. We say that the Ricci soliton is \textit{steady} when $\lambda=0$, \textit{expanding}
 when $\lambda>0$ and \textit{shrinking}  when $\lambda <0$.

Before producing the main results, we prove the following:
\begin{lemma}
	A Kenmotsu pseudo-metric manifold $(M,g)$ satisfies 
	\begin{gather}
		(\nabla_X Q)\xi=-QX-2n\varepsilon X,\label{Eq:11}\\
		(\nabla_\xi Q)X=-2QX-4n\varepsilon X.\label{Eq:12}
	\end{gather} 
\end{lemma}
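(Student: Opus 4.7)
Both identities concern covariant derivatives of $Q$ where one slot is $\xi$. The common starting point is $Q\xi = -2n\varepsilon \xi$ from \eqref{E:3.3}, together with $\nabla_X \xi = X - \eta(X)\xi$ from \eqref{E:2.5}.

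For \eqref{Eq:11}, the plan is a direct Leibniz calculation. Applying $\nabla_X$ to $Q\xi = -2n\varepsilon \xi$ and expanding gives $(\nabla_X Q)\xi + Q(\nabla_X\xi) = -2n\varepsilon \nabla_X \xi$. Substituting $\nabla_X \xi = X - \eta(X)\xi$ on both sides and using $Q\xi = -2n\varepsilon \xi$ once more to simplify $Q(\eta(X)\xi) = -2n\varepsilon\eta(X)\xi$, the $\eta(X)\xi$ contributions cancel, leaving \eqref{Eq:11}.

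For \eqref{Eq:12}, the natural tool is the contracted second Bianchi identity. In a local pseudo-orthonormal basis $\{e_i\}_{i=1}^{2n+1}$ with $\varepsilon_i = g(e_i,e_i)$, it reads
\[
\sum_{i} \varepsilon_i (\nabla_{e_i} R)(X, \xi, Z, e_i) = (\nabla_X Ric)(\xi, Z) - (\nabla_\xi Ric)(X, Z).
\]
To evaluate the left-hand side, I first derive a closed form for $(\nabla_W R)(X, \xi)Y$: differentiate \eqref{E:3.2} covariantly along $W$, apply the Leibniz rule, and feed in \eqref{E:2.5}, \eqref{E:2.6}, and \eqref{E:3.2} a second time to obtain $(\nabla_W R)(X, \xi)Y = \varepsilon\{g(X, Y)W - g(W, Y)X\} - R(X, W)Y$. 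Setting $W = e_i$, pairing with $e_i$, and summing with weights $\varepsilon_i$ reduces the LHS to $2n\varepsilon g(X, Z) + Ric(X, Z)$, via $\sum_i \varepsilon_i g(e_i,e_i) = 2n+1$ and $\sum_i \varepsilon_i R(X, e_i, Z, e_i) = -Ric(X, Z)$. On the right-hand side, the term $(\nabla_X Ric)(\xi, Z) = g((\nabla_X Q)\xi, Z)$ collapses via the just-proved \eqref{Eq:11} to $-Ric(X, Z) - 2n\varepsilon g(X, Z)$. Equating and solving produces $(\nabla_\xi Ric)(X, Z) = -2Ric(X, Z) - 4n\varepsilon g(X, Z)$, which is the tensorial form of \eqref{Eq:12}.

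The main obstacle is the intermediate derivation of $(\nabla_W R)(X, \xi)Y$ and the careful bookkeeping of signs and $\varepsilon$-factors during the trace, since $\xi$ may be timelike and pseudo-Riemannian computations typically split from their Riemannian counterparts precisely at such contractions; once that closed form is in hand, assembling \eqref{Eq:12} from the Bianchi identity is routine.
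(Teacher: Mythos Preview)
Your proof is correct and follows essentially the same approach as the paper. For \eqref{Eq:11} both you and the paper simply differentiate $Q\xi=-2n\varepsilon\xi$ using \eqref{E:2.5}; for \eqref{Eq:12} the paper differentiates \eqref{E:3.1} to obtain $(\nabla_W R)(X,Y)\xi$ (in fact this is already \eqref{E:3.5}), while you differentiate \eqref{E:3.2} to obtain $(\nabla_W R)(X,\xi)Y$, but these two formulas are related by the curvature symmetries and both feed into the same contracted second Bianchi identity in the same way.
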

\begin{proof}
	Differentiating $Q\xi=-2n\varepsilon\xi$, and recalling \eqref{E:2.5} provide \eqref{Eq:11}. 
	
	Now differentiating \eqref{E:3.1} along $W$ leads to
	\begin{equation*}
		(\nabla_W R)(X,Y)\xi=-R(X,Y)W+\varepsilon g(X,W)Y-\varepsilon g(Y,W)X.
	\end{equation*}
	Contracting this with respect to $X$ and $W$ gives us
	\begin{equation}\label{Eq:13}
		\sum_{i=1}^{2n+1}\varepsilon_i g((\nabla_{e_i} R)(e_i,Y)\xi, Z)=Ric(Y,Z)+2ng(Y,Z).
	\end{equation}
	From the second Bianchi identity, one can easily obtain	
	\begin{equation}\label{Eq:14}
		\sum_{i=1}^{2n+1}\varepsilon_i g((\nabla_{e_i}R)(Z,\xi)Y,e_i)=g((\nabla_Z Q)\xi,Y)-g((\nabla_\xi Q)Z,Y).
	\end{equation}
	Fetching \eqref{Eq:14} into \eqref{Eq:13} and with the aid of \eqref{Eq:11}, we infer that
	\begin{equation*}
		g((\nabla_\xi Q)Z,Y)=-2Ric(Y,Z)-4ng(Y,Z),
	\end{equation*}
	which proves \eqref{Eq:12}.
\end{proof}

\begin{theorem}
	Let $(M,g)$ be a Kenmotsu pseudo-metric manifold. If $(g,V)$ is a Ricci soliton, then the soliton constant $\lambda=2n\varepsilon$, and so the soliton is either expanding or shrinking depending on the casual character of the Reeb vector field $\xi$.
\end{theorem}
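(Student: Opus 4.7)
The plan is to compute the tensor $(\pounds_V\nabla)(X,\xi)$ in two independent ways, match the results to get an explicit formula for $\nabla_X(\pounds_V\xi)$, and then feed it into the Ricci commutator to derive an identity that pins down $\lambda$.

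First I will invoke the standard identity
\[
2g((\pounds_V\nabla)(X,Y),Z) = (\nabla_X\pounds_V g)(Y,Z) + (\nabla_Y\pounds_V g)(X,Z) - (\nabla_Z\pounds_V g)(X,Y),
\]
substitute $\pounds_V g = -2\,Ric - 2\lambda g$ from the soliton equation, and set $Y=\xi$. The three resulting $\nabla Ric$-evaluations at $\xi$ are all handled by \eqref{Eq:11} and \eqref{Eq:12} (together with the symmetry of $\nabla Q$), and after short algebraic simplification collapse to $(\pounds_V\nabla)(X,\xi) = 2QX + 4n\varepsilon X$. On the other hand, expanding $(\pounds_V\nabla)(X,\xi)$ directly from its definition, with the help of \eqref{E:2.5}, gives $(\pounds_V\nabla)(X,\xi) = -(\pounds_V\eta)(X)\xi - \eta(X)\pounds_V\xi - \nabla_X(\pounds_V\xi)$. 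Writing $U := \pounds_V\xi$ and equating the two expressions produces the key formula
\[
\nabla_X U = -2QX - 4n\varepsilon X - \eta(X)U - (\pounds_V\eta)(X)\xi. \quad (\star)
\]

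Next I will substitute $(\star)$ into the Ricci commutator $R(X,Y)U = \nabla_X\nabla_Y U - \nabla_Y\nabla_X U - \nabla_{[X,Y]}U$ and specialise $Y=\xi$. The tedious cancellations are driven by the symmetry of $(\nabla\eta)(X,Y) = \varepsilon g(X,Y) - \eta(X)\eta(Y)$, the identities \eqref{Eq:11}, \eqref{Eq:12}, and crucially the vanishing $d(\pounds_V\eta) = \pounds_V d\eta = 0$, which eliminates the exterior-derivative term that would otherwise appear. What survives is $R(X,\xi)U = (\pounds_V\eta)(X)\xi + \eta(U)X$, which, compared with \eqref{E:3.2} in the form $R(X,\xi)U = \varepsilon g(X,U)\xi - \eta(U)X$, yields
\[
\bigl[(\pounds_V\eta)(X) - \varepsilon g(X,U)\bigr]\xi = -2\eta(U)X.
\]
For any nonzero $X \in \ker\eta$ the left-hand side lies in $\mathrm{span}(\xi)$ and the right in $\mathrm{span}(X)$; linear independence of $X$ and $\xi$ forces both to vanish, so $\eta(U)=0$. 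Since the $(\xi,\xi)$-component of the soliton equation gives $-2\varepsilon\eta(U) = 4n - 2\lambda\varepsilon$, i.e. $\eta(U) = \lambda - 2n\varepsilon$, this collapses to $\lambda = 2n\varepsilon$, which is then positive for $\varepsilon=1$ (expanding) and negative for $\varepsilon=-1$ (shrinking). The main obstacle is the bookkeeping in the Ricci-identity step, where all derivatives of $\eta$ and of $\pounds_V\eta$ must be tracked and shown to collapse cleanly; any less delicate contraction of the soliton equation (for instance tracing, or plugging $\xi$ in directly) produces only tautologies.
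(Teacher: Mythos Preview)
Your argument is correct. Both your proof and the paper's begin identically: derive $(\pounds_V\nabla)(X,\xi)=2QX+4n\varepsilon X$ from the soliton equation via \eqref{Eq:11}--\eqref{Eq:12}, and both finish by combining the resulting curvature identity with $\eta(\pounds_V\xi)=\lambda-2n\varepsilon$ from the $(\xi,\xi)$-component of \eqref{E:00}. The middle step differs. The paper covariantly differentiates $(\pounds_V\nabla)(X,\xi)$ and feeds it into Yano's formula \eqref{E:026} to obtain $(\pounds_V R)(X,\xi)\xi=0$, then Lie-differentiates the known expression for $R(X,\xi)\xi$ and compares. You instead equate the two expressions for $(\pounds_V\nabla)(X,\xi)$ to solve for $\nabla_X(\pounds_V\xi)$, and then run the curvature operator $R(X,\xi)$ on $\pounds_V\xi$ via the Ricci identity, comparing the outcome with \eqref{E:3.2}. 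In effect the paper studies $\pounds_V(R(X,\xi)\xi)$ while you study $R(X,\xi)(\pounds_V\xi)$; these are two sides of the same Leibniz expansion. Your route requires the extra observation $d(\pounds_V\eta)=\pounds_V d\eta=0$ to make the $\nabla(\pounds_V\eta)$-terms collapse, which the paper's use of Yano's machinery sidesteps, but the bookkeeping you flag as the ``main obstacle'' does go through cleanly.
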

\begin{proof}
	Differentiating \eqref{E:00} covariantly along $Z$ gives
	\begin{align}\label{E:11}
		(\nabla_Z \pounds_V g)(X,Y)=-2(\nabla_Z Ric)(X,Y).
	\end{align}	
	From Yano \cite{Yano}, we know the following well known commutation formula:
	\begin{align*}
		(\pounds_V \nabla_X g-\nabla_X \pounds_V g&- \nabla_{[V,X]} g)(Y, Z)\\
		&=-g((\pounds_V \nabla)(X, Y), Z)-g((\pounds_V \nabla)(X, Z), Y),
	\end{align*}
	for all $X,Y, Z\in TM$. Since $\nabla g=0$, the previous equation gives
	\begin{equation}\label{E:12}
		(\nabla_X \pounds_V g)(Y, Z)=g((\pounds_V \nabla)(X, Y), Z)+g((\pounds_V \nabla)(X, Z), Y),
	\end{equation}
	for all $X,Y, Z\in TM$. As $\pounds_V \nabla$ is a symmetric, it follows from \eqref{E:12} that
	\begin{align}\label{E:13}
		&g((\pounds_V \nabla)(X, Y), Z)\nonumber\\
		&=\frac{1}{2}(\nabla_X \pounds_V g)(Y, Z)+\frac{1}{2}(\nabla_Y \pounds_V g)(Z, X)-\frac{1}{2}(\nabla_Z \pounds_V g)(X, Y).
	\end{align}
	Making use of \eqref{E:11} in \eqref{E:13} we have
	\begin{align}\label{E:14}
		g((\pounds_V \nabla)(X,Y), Z)=(\nabla_Z Ric)(X,Y)-(\nabla_X Ric)(Y,Z)-(\nabla_Y Ric)(Z,X).
	\end{align}
	Putting $Y=\xi$ in \eqref{E:14} and using \eqref{Eq:11} and \eqref{Eq:12}, we obtain
	\begin{equation*}
		(\pounds_V \nabla)(X,\xi)=2QX+4n\varepsilon X.
	\end{equation*}
	Differentiating the preceding equation along $Y$ and using \eqref{E:2.5}, we obtain
	\begin{equation*}
		(\nabla_Y \pounds_V \nabla)(X,\xi)=-(\pounds_V \nabla)(X,Y)+2\eta(Y)\{QX+2n\varepsilon X \}+2(\nabla_Y Q)X.
	\end{equation*}
	Feeding the above obtained expression into the following relation (see \cite{Yano})
	\begin{equation}\label{E:026}
		(\pounds_V R)(X,Y)Z=(\nabla_X\pounds_V\nabla)(Y,Z)-(\nabla_Y\pounds_V\nabla)(X,Z),
	\end{equation}
	and using the symmetry of $\pounds_V \nabla$, we immediately obtain
	\begin{align}\label{Eq:22}
		(\pounds_V R)(X,Y)\xi=&2\eta(X)\{QY+2n\varepsilon Y \}-2\eta(Y)\{QX+2n\varepsilon X \}\nonumber\\
		&+2\{(\nabla_X Q)Y-(\nabla_Y Q)X \}.
	\end{align}
	Setting $Y=\xi$ in the foregoing equation, we get
	\begin{equation}\label{Eq:23}
		(\pounds_V R)(X,\xi)\xi=0.
	\end{equation}
	Now taking the Lie-derivative of $R(X,\xi)\xi=-X+\eta(X)\xi$ along $V$ gives
	\begin{equation*}
		(\pounds_V R)(X,\xi)\xi-2\eta(\pounds_V \xi)X+\varepsilon g(X,\pounds_V \xi)\xi=(\pounds_V \eta)(X)\xi,
	\end{equation*}
	which by virtue of \eqref{Eq:23} becomes
	\begin{equation}\label{Eq:24}
		(\pounds_V \eta)(X)\xi=-2\eta(\pounds_V \xi)X+\varepsilon g(X,\pounds_V \xi)\xi.
	\end{equation}
	With the help of \eqref{E:3.3}, the equation \eqref{E:00} takes the form
	\begin{equation}\label{Eq:25}
		(\pounds_V g)(X,\xi)=-2\lambda\varepsilon\eta(X)+4n\eta(X).
	\end{equation}
	Changing $X$ to $\xi$ in the aforementioned equation gives
	\begin{equation}\label{Eq:26}
		\eta(\pounds_V \xi)=\lambda-2n\varepsilon.
	\end{equation}
	Now Lie-differentiating $\eta(X)=\varepsilon g(X,\xi)$ yields $(\pounds_V \eta)(X)=\varepsilon (\pounds_V g)(X,\xi)+\varepsilon g(X,\pounds_V \xi)$. Using this and \eqref{Eq:26} in \eqref{Eq:24} provides 
	$(\lambda-2n\varepsilon)(X-\eta(X)\xi)=0.$
	Tracing the previous  equation yield $\lambda=2n\varepsilon$.
\end{proof}
\begin{corollary}
	A Kenmotsu manifold admitting the Ricci soliton is always expanding with $\lambda=2n$.
\end{corollary}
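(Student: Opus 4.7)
The plan is to obtain this corollary as an immediate specialization of the preceding theorem, using the fact that the classical Kenmotsu setting is exactly the Riemannian subcase $\varepsilon=+1$ of the pseudo-metric theory developed in the paper.

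First I would observe that a Kenmotsu manifold in the sense of \cite{Kenm} is precisely an almost contact metric manifold with a positive-definite associated metric $g$ satisfying \eqref{E:2.4}. Because $g$ is Riemannian, the Reeb vector field $\xi$ is necessarily spacelike with $g(\xi,\xi)=1$, so in our notation $\varepsilon=+1$. All the structural identities derived earlier (in particular Proposition~\ref{P:2.03} and equations \eqref{E:3.1}--\eqref{E:3.4}) reduce to their familiar Riemannian Kenmotsu counterparts under this specialization.

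Next I would invoke the preceding theorem, which asserts that whenever $(g,V)$ is a Ricci soliton on a Kenmotsu pseudo-metric manifold $(M,g)$, the soliton constant must satisfy $\lambda=2n\varepsilon$. Setting $\varepsilon=1$ gives $\lambda=2n$. Since $n\ge 1$, we have $\lambda>0$, and hence by the sign convention in \eqref{E:00} the soliton is expanding.

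There is essentially no obstacle here beyond bookkeeping: the only thing to verify is that the Riemannian Kenmotsu structure really is the $\varepsilon=+1$ instance of a Kenmotsu pseudo-metric structure, which follows at once from \eqref{E:2.2} and \eqref{E:e3a}. The substantive work has already been done in the theorem above, and the corollary is simply its Riemannian shadow.
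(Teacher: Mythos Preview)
Your proposal is correct and matches the paper's approach: the corollary is stated without proof immediately after the theorem, so it is implicitly the Riemannian specialization $\varepsilon=+1$ of $\lambda=2n\varepsilon$, exactly as you argue.
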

\begin{lemma}
	Let $(M,g)$ be a Kenmotsu pseudo-metric manifold. If $(g,V)$ is a Ricci soliton, then the Ricci tensor satisfies
	\begin{equation}\label{Eq:016}
		(\pounds_V Ric)(X,\xi)=-X(r)+\xi(r)\eta(X).
	\end{equation}
\end{lemma}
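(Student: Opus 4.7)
The plan is to compute $(\pounds_V Ric)(X,\xi)$ by viewing the Ricci tensor as a contraction of the Riemann tensor and exploiting that the Lie derivative commutes with tensor contractions. Fixing a local pseudo-orthonormal basis $\{e_i\}_{i=1}^{2n+1}$ with $\varepsilon_i = g(e_i,e_i)$ and using $Ric(X,Z)=\sum_i \varepsilon_i\, g(R(e_i,X)Z, e_i)$, we immediately obtain
\[
(\pounds_V Ric)(X,\xi)=\sum_{i=1}^{2n+1}\varepsilon_i\, g\bigl((\pounds_V R)(e_i,X)\xi,\, e_i\bigr),
\]
which reduces the problem to tracing the formula \eqref{Eq:22} that was just derived.

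Substituting \eqref{Eq:22} splits the sum into three pieces. The $\eta(e_i)$-piece collapses via $\sum_i \varepsilon_i\, \eta(e_i)\, g(W,e_i)=\eta(W)$ to $2\eta(QX+2n\varepsilon X)$, which vanishes because $Q$ is self-adjoint and $Q\xi = -2n\varepsilon\xi$ forces $\eta(QX) = -2n\varepsilon\eta(X)$. The $\eta(X)$-piece, by the elementary traces $\sum_i\varepsilon_i g(Qe_i,e_i)=r$ and $\sum_i\varepsilon_i g(e_i,e_i)=2n+1$, contributes $-2\eta(X)[r+2n\varepsilon(2n+1)]$. The covariant-derivative piece, with \eqref{E:4.8} supplying $\sum_i \varepsilon_i g((\nabla_{e_i}Q)X,e_i)=\tfrac12 X(r)$ and the trace $\sum_i\varepsilon_i g((\nabla_X Q)e_i,e_i)=X(r)$, simplifies to $-X(r)$.

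The concluding step is to recognize that $-2[r+2n\varepsilon(2n+1)]$ is precisely $\xi(r)$. This is a Kenmotsu identity requiring no soliton hypothesis, extracted from \eqref{Eq:11}: contracting $(\nabla_X Q)\xi = -QX - 2n\varepsilon X$ gives $\sum_i\varepsilon_i g((\nabla_{e_i}Q)\xi, e_i) = -r - 2n\varepsilon(2n+1)$, while \eqref{E:4.8} identifies this left-hand side with $\tfrac12\xi(r)$. Plugging $\xi(r) = -2[r+2n\varepsilon(2n+1)]$ into our expression yields the claimed identity $(\pounds_V Ric)(X,\xi) = -X(r) + \xi(r)\eta(X)$.

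The main obstacle is the three simultaneous trace computations and the attendant index bookkeeping; the genuinely non-obvious point is noticing that the stray scalar-curvature term $-2\eta(X)[r+2n\varepsilon(2n+1)]$ coincides with $\xi(r)\eta(X)$ by virtue of the auxiliary identity extracted from \eqref{Eq:11}, which would otherwise appear to be an unwanted leftover.
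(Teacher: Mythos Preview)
Your proof is correct and follows essentially the same route as the paper: both contract \eqref{Eq:22} over the first slot to reach
\[
(\pounds_V Ric)(X,\xi) = -X(r) - 2\eta(X)\bigl[r + 2n\varepsilon(2n+1)\bigr],
\]
and then identify $-2[r + 2n\varepsilon(2n+1)]$ with $\xi(r)$.

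The only genuine difference lies in this last identification. The paper obtains $\xi(r) = -2(r + 2n\varepsilon(2n+1))$ by setting $X=\xi$ in the displayed formula and then invoking the soliton identity \eqref{Eq:23}, whose contraction yields $(\pounds_V Ric)(\xi,\xi)=0$. You instead derive it by tracing \eqref{Eq:11} directly and applying $\text{div}\,Q = \tfrac{1}{2}Dr$. Your argument has the mild advantage that the relation \eqref{Eq:28} appears as a general identity on any Kenmotsu pseudo-metric manifold, with no appeal to the Ricci soliton hypothesis; the paper's derivation of the same relation is tied to \eqref{Eq:23} and hence to the soliton equation.
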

\begin{proof}
	Contracting equation \eqref{Eq:22} with respect to $X$ and recalling the well-known formulas 
	\begin{equation*}
		\text{div}Q=\frac{1}{2}Dr \quad \text{and}\quad \text{trace}\nabla Q=Dr,
	\end{equation*}
	we easily obtain
	\begin{equation}\label{Eq:27}
		(\pounds_V Ric)(Y,\xi)=-Y(r)-2\eta(Y)\{r+\varepsilon 2n(2n+1) \}.
	\end{equation}
	Substituting $Y=\xi$, we have $(\pounds_V Ric)(\xi,\xi)=-\xi(r)-2\{r+\varepsilon 2n(2n+1) \}$. On the other hand, contracting \eqref{Eq:23} gives $(\pounds_V Ric)(\xi,\xi)=0.$ Using this in the previous equation leads to 
	\begin{equation}\label{Eq:28}
		\xi(r)=-2(r+\varepsilon 2n(2n+1)).
	\end{equation}
	Hence \eqref{Eq:28} and \eqref{Eq:27} give \eqref{Eq:016}.
\end{proof}
Combining Theorem~\ref{T:4.0} and \ref{T:4.1}, we state the following:
\begin{lemma}
	An $\eta$-Einstein Kenmotsu pseudo-metric manifold $M$ of dimension higher than 3 satisfies
	\begin{equation}\label{Eq:29}
		Dr= \varepsilon \xi(r)\xi.
	\end{equation}
\end{lemma}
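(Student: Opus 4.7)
The plan is to observe that the statement is an immediate consequence of the two theorems cited just before it, and that essentially no new calculation is required. By Theorem~\ref{T:4.1}, being $\eta$-Einstein is logically equivalent, for a Kenmotsu pseudo-metric manifold, to being $\xi$-conformally flat. Therefore, starting from the hypothesis that $M$ is an $\eta$-Einstein Kenmotsu pseudo-metric manifold, one may replace this hypothesis, without loss of generality, by the hypothesis that $M$ is $\xi$-conformally flat.

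Once this replacement is carried out, one simply invokes Theorem~\ref{T:4.0}. The only standing assumptions of that theorem are exactly that $M$ is a $\xi$-conformally flat Kenmotsu pseudo-metric manifold of dimension greater than $3$; both are now in force, the first by the equivalence above and the second by the hypothesis of the present lemma. The conclusion of Theorem~\ref{T:4.0} is precisely $Dr=\varepsilon\xi(r)\xi$, which is the desired identity \eqref{Eq:29}.

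I do not anticipate any real obstacle. The dimensional restriction $2n+1>3$ is essential and is inherited from Theorem~\ref{T:4.0}, where the step passing from $(n-1)Y(a)=\varepsilon\{\xi(b)\eta(Y)+2nb\eta(Y)\}$ to the conclusion requires $n>1$; this matches the restriction already appearing in the proposition giving $X(b)+2b\eta(X)=0$. Thus the proof I would write is literally two sentences: apply Theorem~\ref{T:4.1} to convert the $\eta$-Einstein hypothesis into $\xi$-conformal flatness, then apply Theorem~\ref{T:4.0} to conclude $Dr=\varepsilon\xi(r)\xi$.
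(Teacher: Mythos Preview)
Your proposal is correct and matches the paper's own argument exactly: the paper simply prefaces the lemma with ``Combining Theorem~\ref{T:4.0} and~\ref{T:4.1}, we state the following,'' which is precisely the two-step reduction you describe. No additional work is needed.
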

Now we prove:
\begin{theorem}\label{T:3.2}
	Let $(M,g)$ be an $\eta$-Einstein Kenmotsu pseudo-metric manifold of dimension higher than 3. If $(g,V)$ is a Ricci soliton, then $M$ is Einstein.
\end{theorem}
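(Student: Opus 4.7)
The strategy is to compute $(\pounds_V\mathrm{Ric})(X,\xi)$ in two independent ways to produce an identity of the form $b\,\pounds_V\xi=0$, and then to Lie-differentiate the Kenmotsu structure equation \eqref{E:2.5} once more in order to upgrade this into a pointwise equation forcing $b\equiv0$. Since the $\eta$-Einstein coefficient $b$ vanishes iff $M$ is Einstein (Corollary~\ref{c:3.7}, cf.\ \eqref{E:4.5}), this will suffice. On the one hand, the preceding lemma gives $Dr=\varepsilon\xi(r)\xi$, so $X(r)=\xi(r)\eta(X)$, and substituting into \eqref{Eq:016} yields $(\pounds_V\mathrm{Ric})(X,\xi)=0$. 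On the other hand, Lie-differentiating \eqref{E:4.1} along $V$ and setting $Y=\xi$ produces several terms that all collapse: the $a(\pounds_V g)(X,\xi)$ contribution vanishes because \eqref{E:00} with $\lambda=2n\varepsilon$ and $\mathrm{Ric}(X,\xi)=-2n\eta(X)$ gives $(\pounds_V g)(X,\xi)=0$; the scalar piece $(\varepsilon V(a)+V(b))\eta(X)$ vanishes because $\varepsilon a+b=-2n$ is constant by \eqref{E:4.2}; and $(\pounds_V\eta)(\xi)=0$ follows from \eqref{Eq:24} once $\eta(\pounds_V\xi)=\lambda-2n\varepsilon=0$ is inserted. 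What survives is $(\pounds_V\mathrm{Ric})(X,\xi)=b(\pounds_V\eta)(X)$. Equating this with $0$ and using the relation $(\pounds_V\eta)(X)=\varepsilon g(X,\pounds_V\xi)$, which is \eqref{Eq:24} after the same simplification, produces
$$b\,\pounds_V\xi=0.$$

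For the bootstrap, apply $\pounds_V$ to \eqref{E:2.5}, i.e.\ to $\nabla_X\xi=X-\eta(X)\xi$. The commutation rule $\pounds_V(\nabla_X\xi)=(\pounds_V\nabla)(X,\xi)+\nabla_{\pounds_V X}\xi+\nabla_X(\pounds_V\xi)$, combined with \eqref{E:2.5} applied to $\pounds_V X$ and the expansion $V(\eta(X))=(\pounds_V\eta)(X)+\eta(\pounds_V X)$, collapses after cancellation to
$$(\pounds_V\nabla)(X,\xi)=-(\pounds_V\eta)(X)\,\xi-\eta(X)\,\pounds_V\xi-\nabla_X(\pounds_V\xi).$$
Multiplying by $b$, the first two terms vanish in view of $b\,\pounds_V\xi=0$, while the third obeys $b\nabla_X(\pounds_V\xi)=\nabla_X(b\,\pounds_V\xi)-X(b)\,\pounds_V\xi=2b\eta(X)\,\pounds_V\xi=0$ using $X(b)=-2b\eta(X)$ from \eqref{E:4.6}. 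Hence $b\,(\pounds_V\nabla)(X,\xi)=0$, and inserting the identity $(\pounds_V\nabla)(X,\xi)=2QX+4n\varepsilon X$ extracted in the proof of the main soliton theorem of this section gives
$$b\,(QX+2n\varepsilon X)=0.$$

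Finally, pick any nonzero $X\in\mathrm{Ker}\,\eta$. By \eqref{E:4.7} one has $QX=aX$, so the last equation reduces to $b(a+2n\varepsilon)X=0$; using $a+2n\varepsilon=-\varepsilon b$, which is a rearrangement of \eqref{E:4.2}, this becomes $-\varepsilon b^2 X=0$, forcing $b\equiv 0$. Consequently $\mathrm{Ric}=-2n\varepsilon g$ and $M$ is Einstein. The delicate point is orchestrating the first Lie-derivative computation so that every term other than $b(\pounds_V\eta)(X)$ drops out; once this is in hand, the remainder is a direct bootstrap recycling $b\,\pounds_V\xi=0$ against the pointwise Kenmotsu identity $\nabla\xi=I-\eta\otimes\xi$.
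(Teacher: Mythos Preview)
Your proof is correct and, up to the identity $b\,\pounds_V\xi=0$ (equivalently $(r+2n(2n+1)\varepsilon)\pounds_V\xi=0$), proceeds exactly as the paper does: both compute $(\pounds_V Ric)(X,\xi)$ in two ways, once via \eqref{Eq:016} together with the gradient lemma \eqref{Eq:29}, and once by Lie--differentiating the $\eta$--Einstein relation.

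The bootstrap from $b\,\pounds_V\xi=0$ to $b=0$ is organized differently. The paper restricts to an open set $\mathcal{O}$ on which $b\neq 0$, so that $\pounds_V\xi=0$ there, and then re-evaluates $(\pounds_V\nabla)(X,\xi)$ via the formula $(\pounds_V\nabla)(X,Y)=\nabla_X\nabla_Y V-\nabla_{\nabla_X Y}V+R(V,X)Y$ of Duggal--Sharma to reach a contradiction. You instead Lie--differentiate \eqref{E:2.5} directly, multiply the resulting expression for $(\pounds_V\nabla)(X,\xi)$ through by $b$, and invoke \eqref{E:4.6} to kill the residual term $b\,\nabla_X(\pounds_V\xi)$ pointwise; combining with the already established identity $(\pounds_V\nabla)(X,\xi)=2QX+4n\varepsilon X$ then gives $b(a+2n\varepsilon)=-\varepsilon b^{2}=0$ globally, without ever passing to an open set. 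Your packaging is slightly cleaner in that it avoids the contradiction detour, while the paper's route via the explicit second--order formula in $V$ is perhaps more transparent about where the obstruction comes from; the underlying idea---compare two expressions for $(\pounds_V\nabla)(X,\xi)$ once $\pounds_V\xi$ has been neutralized---is the same in both.
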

\begin{proof}
	Making use of \eqref{Eq:29} in \eqref{Eq:016}, we have $(\pounds_V Ric)(X,\xi)=0$.
	Now, Lie-differentiating the first relation of \eqref{E:3.3} along $V$, using \eqref{E:4.4}, \eqref{Eq:25}, $\lambda=2n\varepsilon$ and $\eta(\pounds_V \xi)=0$, we obtain
	\begin{equation*}
		(r+\varepsilon 2n(2n+1))\pounds_V \xi=0.
	\end{equation*}
	If $r=-\varepsilon 2n(2n+1)$, then \eqref{E:4.4} shows that $M$ is Einstein. 
	
	So we assume $r\neq -\varepsilon2n(2n+1)$ in some open set $\mathcal{O}$ of $M$. Hence $\pounds_V \xi=0$ on $\mathcal{O}$, and so it follows from \eqref{E:2.5} that 
	\begin{equation}\label{E:25}
		\nabla_\xi V=V-\eta(V)\xi.
	\end{equation}
	Clearly, \eqref{Eq:25} shows that $(\pounds_V g)(X,\xi)=0$. This together with \eqref{E:25}, we have
	\begin{equation}\label{E:26}
		g(\nabla_X V,\xi)=-g(\nabla_\xi V, X)=-g(X,V)+\eta(X)\eta(V).
	\end{equation}
	From Duggal and Sharma \cite{Duggal}, we know that
	\begin{equation*}
		(\pounds_V \nabla)(X,Y)=\nabla_X\nabla_Y-\nabla_{\nabla_X Y} V+R(V,X)Y.
	\end{equation*}
	Setting $Y=\xi$ in the above equation and by virtue of \eqref{E:2.5}, \eqref{E:3.1}, \eqref{E:25} and \eqref{E:26}, we have $r=-\varepsilon2n(2n+1)$. This leads to a contradiction as $r\neq -\varepsilon 2n(2n+1)$ on $\mathcal{O}$ and completes the proof.
\end{proof}

Now we consider Kenmotsu pseudo-metric 3-manifolds which admits Ricci solitons.
\begin{theorem}\label{T:3.3}
	Let $(M,g)$ be a Kenmotsu pseudo-metric 3-manifold. If $(g,V)$ is a Ricci soliton, then $M$ is of constant curvature $-\varepsilon$.
\end{theorem}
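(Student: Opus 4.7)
The key observation is that in dimension three every Kenmotsu pseudo-metric manifold is automatically $\eta$-Einstein. Indeed, substituting $Z=\xi$ into the classical three-dimensional identity
\begin{equation*}
R(X,Y)Z=Ric(Y,Z)X-Ric(X,Z)Y+g(Y,Z)QX-g(X,Z)QY-\tfrac{r}{2}\bigl[g(Y,Z)X-g(X,Z)Y\bigr]
\end{equation*}
and invoking \eqref{E:3.1} and \eqref{E:3.3}, a short calculation yields
\begin{equation*}
Q=\left(\tfrac{r}{2}+\varepsilon\right)I-\left(\tfrac{r}{2}+3\varepsilon\right)\eta\otimes\xi,
\end{equation*}
which is the $\eta$-Einstein form \eqref{E:4.7} with $n=1$.

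By the theorem on the soliton constant proved earlier in this section, $\lambda=2\varepsilon$, and inserting the above expression for $Q$ into the soliton equation \eqref{E:00} gives the compact form
\begin{equation*}
\pounds_V g=-h\bigl[g-\varepsilon\,\eta\otimes\eta\bigr],\qquad h:=r+6\varepsilon.
\end{equation*}
In particular $(\pounds_V g)(X,\xi)=0$ for every $X$, so from $\eta(X)=\varepsilon g(X,\xi)$ we deduce $(\pounds_V\eta)(X)=\varepsilon g(X,\pounds_V\xi)$; combined with $\eta(\pounds_V\xi)=0$ (which comes from \eqref{Eq:26}), this yields in particular $(\pounds_V\eta)(\xi)=0$. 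From \eqref{Eq:28} we also retain the scalar identity $\xi(h)=\xi(r)=-2h$.

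The heart of the proof is to evaluate $\pounds_{[V,\xi]}g$ at $(\xi,\xi)$ in two different ways. On one hand, $W:=[V,\xi]=\pounds_V\xi$ satisfies $g(W,\xi)=\varepsilon\eta(W)=0$, and $\nabla_\xi\xi=0$ by \eqref{E:2.5}, so
\begin{equation*}
(\pounds_W g)(\xi,\xi)=2g(\nabla_\xi W,\xi)=0.
\end{equation*}
On the other hand, using the Lie-bracket identity $\pounds_{[V,\xi]}=[\pounds_V,\pounds_\xi]$ together with $\pounds_\xi g=2g-\varepsilon\,\eta\otimes\eta$, $\pounds_\xi\eta=0$, the above form of $\pounds_V g$, the vanishing of $(\pounds_V\eta)(\xi)$, and $\xi(h)=-2h$, a short direct computation gives
\begin{equation*}
(\pounds_V\pounds_\xi g)(\xi,\xi)=0,\qquad (\pounds_\xi\pounds_V g)(\xi,\xi)=-\varepsilon h,
\end{equation*}
whence $([\pounds_V,\pounds_\xi]g)(\xi,\xi)=\varepsilon h$. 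Equating the two values forces $h\equiv 0$, i.e., $r\equiv-6\varepsilon$.

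The only real subtlety is bookkeeping: one must carefully track how the $\eta\otimes\eta$ contributions in $\pounds_V g$ and $\pounds_\xi g$ combine under successive Lie differentiations, and it is precisely the scalar relation $\xi(h)=-2h$ that ensures the $g$-part of the residual cancels while leaving a nontrivial $\eta\otimes\eta$ term proportional to $h$. Once $r\equiv-6\varepsilon$, the $\eta$-Einstein form collapses to $Ric=-2\varepsilon g$, so $M$ is Einstein; as every three-dimensional Einstein manifold has constant sectional curvature, $M$ is of constant curvature $-\varepsilon$.
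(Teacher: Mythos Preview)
Your reduction to the $\eta$-Einstein form and the identities $\lambda=2\varepsilon$, $\pounds_V g=-h(g-\varepsilon\,\eta\otimes\eta)$ with $h=r+6\varepsilon$, and $\xi(h)=-2h$ are all fine. The failure is at the decisive step: the claimed value $(\pounds_\xi\pounds_V g)(\xi,\xi)=-\varepsilon h$ is incorrect. For any symmetric $2$-tensor $S$ one has $(\pounds_\xi S)(\xi,\xi)=\xi\bigl(S(\xi,\xi)\bigr)$, and here $(\pounds_V g)(\xi,\xi)=-h\bigl(g(\xi,\xi)-\varepsilon\,\eta(\xi)^2\bigr)=-h(\varepsilon-\varepsilon)=0$ identically, so $(\pounds_\xi\pounds_V g)(\xi,\xi)=0$. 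Your commutator identity therefore collapses to $0=0$ and yields no constraint on $h$. The miscomputation seems to come from using the formula $\pounds_\xi g=2g-\varepsilon\,\eta\otimes\eta$ as stated in \eqref{E:2.7}; that equation has a typographical error, and the correct relation (obtained directly from \eqref{E:2.5}) is $\pounds_\xi g=2(g-\varepsilon\,\eta\otimes\eta)=2T$. With this, $\pounds_\xi T=2T$ and $\pounds_\xi(-hT)=-(\xi h+2h)T=0$, confirming the vanishing above.

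The paper's own argument avoids this trap by producing a \emph{second}, independent differential relation for $r$: substituting the three-dimensional $Q$ into \eqref{Eq:22}, setting $Y=\xi$, and comparing with \eqref{Eq:23} gives $\xi(r)=-(r+6\varepsilon)$; combined with $\xi(r)=-2(r+6\varepsilon)$ from \eqref{Eq:28}, this forces $r=-6\varepsilon$. Your single relation $\xi(h)=-2h$ is not enough by itself, and the Lie-bracket trick at $(\xi,\xi)$ cannot supply the missing equation because both sides are tautologically zero.
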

\begin{proof}
	The Riemannian curvature tenor of pseudo-Riemannian 3-manifold is given by
	\begin{align}\label{E:299}
		R(X,Y)Z=&g(Y,Z)QX-g(X,Z)QY+g(QY,Z)X-g(QX,Z)Y\nonumber\\
		&-\frac{r}{2}\{g(Y,Z)X-g(X,Z)Y \}.
	\end{align}
	Taking $Y=Z=\xi$ in \eqref{E:299} and using \eqref{E:3.1} and \eqref{E:3.3} gives 
	\begin{equation} \label{Eq:40}
		Q=\left( \frac{r}{2}+1\right)I-\left( \frac{r}{2}+3\right)\eta\otimes\xi.
	\end{equation}
	Making use of this in \eqref{Eq:22} gives	\begin{align}\label{E:32}
		(\pounds_V R)(X,Y)\xi=&X(r)\{Y-\eta(Y)\xi\}
		+Y(r)\{-X+\eta(X)\xi \}\nonumber\\
		&-(r+6\varepsilon)\{\eta(Y)X-\eta(X)Y\}.
	\end{align}
	Replacing $Y$ by $\xi$ in the above equation and comparing it with \eqref{Eq:23}, we obtain
	\begin{equation*}
	\{\xi(r)+(r+6\varepsilon )\}\{-X+\eta(X)\xi \}=0.
	\end{equation*}
	Contracting the above equation gives $\xi(r)+(r+6\varepsilon )=0$, and consequently it follows from \eqref{Eq:28} that  $r=-6\varepsilon$. Then from \eqref{Eq:40} we have $QX=-2\varepsilon X$, and substituting this into \eqref{E:299} shows that $M$ is of constant  curvature $-\varepsilon$. 
\end{proof}
\begin{corollary}
	There does not exist a Kenmotsu pseudo-metric manifold $(M,g)$ admitting the Ricci soliton $(g,V=\xi)$.
\end{corollary}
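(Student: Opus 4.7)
The plan is to derive a direct contradiction by substituting $V=\xi$ into the soliton equation \eqref{E:00} and then evaluating both sides on the pair $(\xi,\xi)$. By Proposition~\ref{P:2.03} we have $\pounds_\xi g=2g-\varepsilon\eta\otimes\eta$, so the soliton equation with $V=\xi$ reads
\[
2g(X,Y)-\varepsilon\eta(X)\eta(Y)+2\,Ric(X,Y)+2\lambda g(X,Y)=0.
\]
Specializing to $X=Y=\xi$ and using $g(\xi,\xi)=\varepsilon$, $\eta(\xi)=1$, together with $Ric(\xi,\xi)=-2n$ from \eqref{E:3.3} and the soliton constant $\lambda=2n\varepsilon$ obtained in the preceding theorem, the left-hand side collapses to $\varepsilon-4n+4n\varepsilon^{2}=\varepsilon$, so the equation forces $\varepsilon=0$, which contradicts $\varepsilon=\pm 1$.

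The main point is that no real obstacle arises here: every ingredient — the formula for $\pounds_\xi g$, the value $Ric(\xi,\xi)=-2n$, and the soliton constant $\lambda=2n\varepsilon$ — has already been established earlier in the paper, so the corollary is essentially a one-line consistency check evaluated at $\xi$. The only mild subtlety worth flagging is the temptation to invoke Theorem~\ref{T:3.2} or Theorem~\ref{T:3.3} first; those require either an $\eta$-Einstein hypothesis with $\dim M>3$ or $\dim M=3$, whereas the direct $(\xi,\xi)$-evaluation works uniformly in all dimensions and for both causal characters of $\xi$, and therefore gives the cleanest route to the nonexistence conclusion.
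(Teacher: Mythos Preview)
Your argument rests on the formula \eqref{E:2.7}, $\pounds_\xi g = 2g-\varepsilon\,\eta\otimes\eta$, but that identity carries a typographical slip: the correct expression, obtained directly from \eqref{E:2.5} via $(\pounds_\xi g)(X,Y)=g(\nabla_X\xi,Y)+g(\nabla_Y\xi,X)$, is
\[
\pounds_\xi g = 2\bigl(g-\varepsilon\,\eta\otimes\eta\bigr).
\]
Indeed $\nabla_\xi\xi=0$ forces $(\pounds_\xi g)(\xi,\xi)=0$, whereas the printed formula would give the value $\varepsilon$, which is precisely the spurious term producing your ``contradiction''. With the correct coefficient, the $(\xi,\xi)$-evaluation of \eqref{E:00} reads
\[
0+2(-2n)+2\lambda\varepsilon=0,
\]
i.e.\ $\lambda=2n\varepsilon$ --- exactly the identity you were already importing from the preceding theorem, and hence a tautology rather than an obstruction. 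Evaluating along any pair $(X,\xi)$ likewise returns only $\lambda=2n\varepsilon$. So the $\xi$-direction alone carries no new information, and the one-line route collapses.

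The paper's proof avoids this trap by first recording the full soliton equation as $Ric=-(1+\lambda)g+\varepsilon\,\eta\otimes\eta$ (which is consistent with the corrected Lie-derivative formula), observing that this makes $M$ $\eta$-Einstein with constant coefficients, and then invoking Theorems~\ref{T:3.2} and~\ref{T:3.3} (covering $\dim M>3$ and $\dim M=3$ respectively) to force $M$ to be Einstein; the nonvanishing coefficient $\varepsilon$ in front of $\eta\otimes\eta$ then gives the contradiction. A lighter alternative in dimension $>3$ is to appeal directly to Corollary~\ref{c:3.7}: since $b=\varepsilon$ is constant, the manifold would have to be Einstein, forcing $b=0$. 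Either way, some structural input beyond the $\xi$-direction is genuinely needed.
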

\begin{proof}
	If $V=\xi$, then from \eqref{E:2.7} the Ricci soliton equation \eqref{E:00} would become
	\begin{equation}\label{EQ:49}
	Ric=-(1+\lambda)g+\varepsilon\eta\otimes\eta,
	\end{equation}
	which means $M$ is $\eta$-Einstein. Then due to Theorem~\ref{T:3.2} and \ref{T:3.3}, $M$ must be Einstein, and this will be a contradiction to equation \eqref{EQ:49} as $\varepsilon\neq 0$.
\end{proof}
\begin{remark}
	Clearly, Theorem~\ref{T:3.2} and \ref{T:3.3} are generalizations of the results of Ghosh proved in \cite{Ghosh} and \cite{Ghosh2}. Note that our approach and technique to obtain the result is different to that of Ghosh.
\end{remark}

Now we provide an example of a Kenmotsu pseudo-metric 3-manifold which admits a Ricci soliton and verify our results.
\begin{example}\label{Ex:01}
	Let $M=N \times I$, where $N$ is an open connected subset of $\mathbb{R}^2$ and $I$ is an open interval in $\mathbb{R}$.
	Let $(x,y,z)$ be the Cartesian coordinates in $M$. Define the structure $(\varphi, \xi, \eta, g)$ on $M$ as follows:
	\begin{align*}
		\varphi\left( \frac{\partial}{\partial x}\right)& = \frac{\partial}{\partial y},\quad \varphi\left( \frac{\partial}{\partial y}\right) =-\frac{\partial}{\partial x},\quad \varphi\left( \frac{\partial}{\partial z}\right) =0, \\
		\xi&= \frac{\partial}{\partial z},\quad \eta=dz,\\
		(g_{ij})&=
		\begin{pmatrix}
			e^{2z}\quad & 0\quad & 0\\
			0\quad & e^{2z}\quad & 0\\
			0\quad & 0\quad & \varepsilon
		\end{pmatrix}.
	\end{align*}
	Now from Koszul's formula, the Levi-Civita connection $\nabla$ is given by
	\begin{equation}\label{E:43}
		\begin{aligned}
			\nabla_{\partial_1}\partial_1 &=-\varepsilon e^{2z}\partial_3, &\nabla_{\partial_1}\partial_2&=0, &\nabla_{\partial_1}\partial_3&=\partial_1,\\
			\nabla_{\partial_2}\partial_1 &=0, &\nabla_{\partial_2}\partial_2&=-\varepsilon e^{2z}\partial_3, &\nabla_{\partial_2}\partial_3&=\partial_2,\\
			\nabla_{\partial_3}\partial_1 &=\partial_1, &\nabla_{\partial_3}\partial_2&=\partial_2, &\nabla_{\partial_3}\partial_3&=0,
		\end{aligned}
	\end{equation}
	where $\partial_1=\frac{\partial}{\partial x}, \partial_2=\frac{\partial}{\partial y}$ and $\partial_3=\frac{\partial}{\partial z}$. From \eqref{E:43}, one can easily verify
	\begin{equation}
	(\nabla_{\partial_i} \varphi){\partial_j}=-\eta(\partial_j)\varphi \partial_i-\varepsilon g(\partial_i, \varphi \partial_j)\xi,
	\end{equation}
	for all $i,j=1,2,3$, and so $M$  is a Kenmotsu pseudo-metric manifold with the above $( \varphi, \xi, \eta, g)$ structure.
	
	With the help of \eqref{E:43}, we find that: 
	\begin{equation}\label{E:44}
		\begin{gathered}
			R(\partial_1, \partial_2)\partial_3=R(\partial_2, \partial_3)\partial_1 =R(\partial_1, \partial_3)\partial_2 =0,\\
			R(\partial_1, \partial_3)\partial_1=R(\partial_2, \partial_3)\partial_2=\varepsilon e^{2z}\partial_3,\\
			R(\partial_1, \partial_2)\partial_1=\varepsilon e^{2z}\partial_2, \quad R(\partial_2, \partial_3)\partial_3=-\partial_2,\\
			R(\partial_1, \partial_3)\partial_3=-\partial_1, \quad R(\partial_1, \partial_2)\partial_2=-\varepsilon e^{2z}\partial_1.
		\end{gathered}
	\end{equation}
	Let $e_1=e^{-z}\partial_1, e_2=e^{-z}\partial_2$ and $e_3=\xi=\partial_3$. Clearly, $\{e_1, e_2,e_3\}$ forms an orthonormal $\varphi$-basis of vector fields on $M$. Making use of \eqref{E:44} one can easily show that $M$ is Einstein, that is, $Ric(Y,Z)=-2\varepsilon g(Y,Z)$, for any $Y,Z\in TM$. 
	
	Let us consider the vector field
	\begin{equation}\label{E:45}
	V=\alpha\frac{\partial}{\partial y},
	\end{equation}
	where $\alpha$ is a non-zero constant. Making use of \eqref{E:43} one can easily show that $V$ is Killing with respect to $g$, that is, we have $$(\pounds_V g)(X,Y)=g(\nabla_X V, Y)+g(\nabla_Y V, X)=0,$$ for any $X,Y\in TM$. Hence $g$ is a Ricci soliton, that is, \eqref{E:00} holds true with $V$ as in \eqref{E:45} and $\lambda=2\varepsilon$. Further \eqref{E:44} shows that $$R(X,Y)Z=-\varepsilon\{g(Y,Z)X-g(X,Z)Y\},$$ for any $X,Y\in TM$, which means $M$ is of constant curvature $-\varepsilon$ and so Theorem~\ref{T:3.3} is verified.
\end{example}

\section*{Acknowledgement} The authors would like to thank the reviewer for careful and thorough reading of this manuscript and thankful for helpful suggestions towards the improvement of this paper.

\bibliography{mmnsample}
\bibliographystyle{mmn}

\end{document}